\newtheorem{thm}{Theorem} 
\newtheorem{lem}[thm]{Lemma}
\theoremstyle{definition}
\newcommand{\dw}{\overrightarrow{\nu}}
\newcommand{\supp}{\mbox{supp}}
\title{The stable maximum nullity of digraphs and $1$-DAGs}
\author{Marina Arav}
 \address{Department of Mathematics and Statistics, Georgia State University, Atlanta, Georgia , USA}
\author{Hein van der Holst}
  \address{Department of Mathematics and Statistics, Georgia State University, Atlanta, Georgia , USA}
\begin{document}

\maketitle

\begin{abstract}
Given a digraph $D=(V,A)$ with vertex-set $V=\{1,\ldots,n\}$ and arc-set $A$, we denote by $Q(D)$ the set of all real $n\times n$ matrices $B=[b_{u,w}]$ with $b_{u,u}\not=0$ for all $u\in V$, $b_{u,w} \not= 0$ if $u\not=w$ and there is an arc from $u$ to $w$, and
 $b_{u,w}=0$ if $u\not=w$ and there is no arc from $u$ to $w$. We say that a matrix $B\in Q(D)$ has the Asymmetric Strong Arnold Property (ASAP) if $X\circ B = 0$, $X^T B = 0$,  and $B X^T = 0$ implies $X=0$. We define 
 the stable maximum nullity, $\dw(D)$, of a digraph $D$ as the largest nullity of any matrix $A\in Q(D)$ that has the ASAP\@. We show that a digraph $D$ has $\dw(D)\leq 1$ if and only $D$ and $\overleftarrow{D}$ a partial $1$-DAGs.
\end{abstract}
%\begin{keyword}
%bipartite graph\sep nullity\sep matchings
%\MSC 05C50\sep 05C70\sep 15A03
%\end{keyword}	

%\end{frontmatter}

\section{Introduction}

Given a digraph $D=(V,A)$ with vertex-set $V=\{1,\ldots,n\}$ and arc-set $A$, we denote by $Q(D)$ the set of all real $n\times n$ matrices $B=[b_{u,w}]$ with 
\begin{enumerate}
\item $b_{u,u}\not=0$ for all $u\in V$,
\item $b_{u,w} \not= 0$ if $u\not=w$ and there is an arc from $u$ to $w$, and
\item $b_{u,w}=0$ if $u\not=w$ and there is no arc from $u$ to $w$.
\end{enumerate}

We say that a matrix $B\in Q(D)$ has the Asymmetric Strong Arnold Property (ASAP) if $X\circ B = 0$, $X^T B = 0$,  and $B X^T = 0$ implies $X=0$. Observe that since the largest rank of a matrix $A\in Q(D)$ equals $n$, there exists a matrix $A\in Q(D)$ that satisfies the ASAP\@.

For a digraph $D$, we define the parameter $\dw(D)$ to be the largest nullity of any $A\in Q(D)$ that satisfies ASAP\@.

If $D$ is a digraph, we denote by $\overleftarrow{D}$ the digraph obtained from $D$ by reversing each arrow. If $A\in Q(D)$, then $A^T\in Q(\overleftarrow{D})$.

\begin{lem}
    If $A\in Q(D)$ has the ASAP, then $A^T\in Q(\overleftarrow{D})$ has the ASAP\@.
\end{lem}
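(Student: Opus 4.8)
The plan is to show that the three equations defining the ASAP for $A^T$ become, upon transposition, exactly the three equations defining the ASAP for $A$; the conclusion then follows at once from the hypothesis. To set up the statement I would first recall the remark immediately preceding the lemma, namely that $A\in Q(D)$ implies $A^T\in Q(\overleftarrow{D})$, so that it makes sense to ask whether $A^T$ has the ASAP.

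Concretely, I would start from an arbitrary real $n\times n$ matrix $X$ satisfying $X\circ A^T=0$, $X^TA^T=0$, and $A^TX^T=0$, with the goal of deducing $X=0$. Taking transposes of each of these equations and using the identities $(M\circ N)^T=M^T\circ N^T$ and $(MN)^T=N^TM^T$, they become $X^T\circ A=0$, $AX=0$, and $XA=0$, respectively. Writing $Y:=X^T$, these three relations read $Y\circ A=0$, $AY^T=0$, and $Y^TA=0$, which is precisely the hypothesis of the ASAP for $A$. Since $A$ has the ASAP we get $Y=0$, and hence $X=Y^T=0$, as required.

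I expect essentially no obstacle here: the only bookkeeping point is that transposition fixes the Hadamard equation $X\circ A^T=0$ but interchanges the two one-sided equations $X^TA^T=0$ and $A^TX^T=0$. Because the ASAP demands all three conditions simultaneously, this interchange is harmless, and in fact it exhibits the ASAP as a condition invariant under $A\mapsto A^T$; in that sense the lemma is really an observation. If desired, one could phrase the whole argument symmetrically by noting that the map $X\mapsto X^T$ is a bijection on $n\times n$ matrices carrying the solution set of the ASAP system for $A^T$ onto that for $A$.
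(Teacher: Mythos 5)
Your proof is correct and follows essentially the same route as the paper's: both substitute $Y=X^T$ and transpose the three defining equations, observing that transposition fixes the Hadamard condition and swaps the two one-sided conditions. Your write-up is just a more explicit version of the paper's argument.
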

\begin{proof}
    Suppose $A\in Q(D)$ has the ASAP\@. Then, if $X\circ A  = 0$, $X^T A = 0$, and $A X^T = 0$, then $X=0$. Since $X\circ A = 0$ if and only if $X^T\circ A^T$, we obtain that if $Y\circ A^T = 0$, $A^T Y^T = 0$, and $Y^T A^T = 0$, then $Y=0$. Hence $A^T\in Q(\overleftarrow{D})$ has the ASAP.
\end{proof}

Since the nullity of $A$ equals the nullity of $A^T$, we obtain:

\begin{lem}
For any digraph $D$, $\dw(D)=\dw(\overleftarrow{D})$.
\end{lem}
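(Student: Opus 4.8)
The plan is to obtain this as an immediate formal consequence of the preceding lemma, using only that transposition preserves nullity and that reversing all arrows is an involution on digraphs.

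First I would pick a matrix $A \in Q(D)$ that has the ASAP and whose nullity equals $\dw(D)$; such a matrix exists because the set of matrices in $Q(D)$ with the ASAP is nonempty and the nullity is an integer between $0$ and $n$, so the largest such nullity is attained. By the preceding lemma, $A^T \in Q(\overleftarrow{D})$ also has the ASAP, and since the nullity of $A^T$ equals the nullity of $A$, which is $\dw(D)$, the matrix $A^T$ witnesses $\dw(\overleftarrow{D}) \geq \dw(D)$.

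Next I would run the symmetric argument, replacing $D$ by $\overleftarrow{D}$ and using that $\overleftarrow{\overleftarrow{D}} = D$: a matrix in $Q(\overleftarrow{D})$ with the ASAP achieving nullity $\dw(\overleftarrow{D})$ transposes to a matrix in $Q(D)$ with the ASAP of the same nullity, which gives $\dw(D) \geq \dw(\overleftarrow{D})$. Combining the two inequalities yields $\dw(D) = \dw(\overleftarrow{D})$.

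I do not expect any real obstacle here. The whole argument rests on the transposition symmetry of the ASAP established in the previous lemma, together with the elementary facts that $A$ and $A^T$ have the same nullity and that double arrow-reversal is the identity on digraphs; everything else is routine bookkeeping with the two inequalities.
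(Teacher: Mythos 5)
Your proof is correct and is essentially the paper's argument spelled out in full: the paper derives the lemma in one line from the preceding lemma (transposition preserves the ASAP) together with the fact that $A$ and $A^T$ have the same nullity, and your two-inequality bookkeeping via $\overleftarrow{\overleftarrow{D}}=D$ is exactly the intended reasoning.
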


The support of a vector $x=[x_i]\in \mathbb{R}^n$ is the set of all $i$ with $x_i\not=0$. We use the notation $\supp(x)$ for the support of a vector $x$.
Let $D$ be a digraph. Two subsets $R, S$ (in this order) of $V(D)$ are said to \emph{touch} if $R\cap S\not=\emptyset$ or there is an arc connecting a vertex of $R$ to a vertex of $S$. We say that a matrix $A\in Q(D)$ has the Support Property (SP) if the support of every nonzero vector $x$ in the left null space touches the support of every nonzero vector $y$ in the right null space.

\begin{lem}
    Let $D$ be a digraph. If $A\in Q(D)$ has the ASAP, then $A$ has the SP.
\end{lem}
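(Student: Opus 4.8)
The plan is to prove the contrapositive: if $A\in Q(D)$ does not have the SP, then $A$ does not have the ASAP. So suppose there exist a nonzero vector $x$ in the left null space of $A$ (that is, $x^TA=0$) and a nonzero vector $y$ in the right null space of $A$ (that is, $Ay=0$) such that $\supp(x)$ and $\supp(y)$, in this order, do not touch; equivalently, $\supp(x)\cap\supp(y)=\emptyset$ and there is no arc from any vertex of $\supp(x)$ to any vertex of $\supp(y)$.

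The candidate witness against the ASAP is the rank-one matrix $X=xy^T$, so that $X_{u,w}=x_uy_w$ and $X^T=yx^T$. Two of the three ASAP equations are then immediate: $X^TA=y(x^TA)=0$ and $AX^T=(Ay)x^T=0$. It remains to verify $X\circ A=0$, which I would do entrywise. The $(u,w)$ entry of $X\circ A$ is $x_uy_wA_{u,w}$, and I split into three cases. If $u=w$, then $x_uy_u=0$ because $\supp(x)\cap\supp(y)=\emptyset$. If $u\neq w$ and there is an arc from $u$ to $w$, then $x_uy_w=0$, since $x_u\neq 0$ and $y_w\neq 0$ would put $u\in\supp(x)$ and $w\in\supp(y)$ with an arc from one to the other, contradicting the non-touching hypothesis. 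If $u\neq w$ and there is no arc from $u$ to $w$, then $A_{u,w}=0$ by the definition of $Q(D)$. In each case the entry vanishes, so $X\circ A=0$.

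Since $x\neq 0$ and $y\neq 0$, the matrix $X=xy^T$ is nonzero, and it satisfies $X\circ A=0$, $X^TA=0$, $AX^T=0$; hence $A$ violates the ASAP, which completes the contrapositive. There is no genuinely hard step here; the only point requiring care is the bookkeeping around the definition of ``touch'' — matching which support plays the role of $R$ and which the role of $S$, and the direction of the connecting arc — so that the disjointness-and-no-arc hypothesis lines up precisely with the diagonal case and the arc case of the entrywise check. (The case where $A$ has trivial left or right null space is not an issue, since then the SP holds vacuously.)
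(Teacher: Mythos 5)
Your proposal is correct and follows essentially the same route as the paper: prove the contrapositive by taking the rank-one witness $X=xy^T$ and checking that $X^TA=0$, $AX^T=0$, and $X\circ A=0$ while $X\neq 0$. Your entrywise verification of $X\circ A=0$ is just a more detailed spelling-out of what the paper asserts directly.
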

\begin{proof}
We prove this by contraposition. Suppose $A$ does not have the SP. Then there exist a nonzero vector $x$ in the left null space of $A$ and a nonzero vector $y$ in the right null space of $A$ such that $\supp(x)\cap \supp(y)=\emptyset$ and there is no arc connecting a vertex $v$ in $\supp(x)$ to a vertex $w$ in $\supp(y)$. Let $X=[x_{i,j}] = x y^T$. Then $X^T A = 0$, $A X^T = 0$, $X\circ A = 0$, but $X\not=0$. Hence $A$ does not have the ASAP\@.
\end{proof}

\section{Minors and bipartite graphs}

If $D$ is a digraph, we can construct a bipartite graph $G$ with a perfect matching $M$ as follows. For each vertex $v$ of $D$, there correspond two vertices $v^-$ and $v^+$ and an edge $v^-v^+$ in $G$. If $vw$ is an arc from $v$ to $w$, then in $G$ there is an edge $v^-$ to $w^+$. The matching $M$ consists of all edges $v^-v^+$, for $v$ a vertex of $D$. 

Conversely, if $G$ is a bipartite graph with a perfect matching $M$, we can construct a digraph $D=D(G, M)$ as follows. Label the vertices in one the parts of the bipartition as $v_1^-,v_2^-,\ldots,v_n^-$ and the vertices in the other part as $v_1^+,v_2^+,\ldots,v_n^+$ such that $v_i^-v_i^+$ are the edges in the matching $M$. Let the vertex-set of $D$ be $V := \{v_1,v_2,\ldots,v_n\}$. The arc-set of $D$ consists of all arcs $v_iv_j$ such that $v_i^-v_j^+ \in E(G)-M$. 

In \cite{MR4658905}, Arav et al. introduced the graph parameter $Mb_S(G)$ for each bipartite graph $G$ with a perfect matching.

\begin{thm}
If $G$ is a bipartite graph and $M$ is a perfect matching of $G$, then $\dw(D(G, M)) = Mb_S(G)$.
\end{thm}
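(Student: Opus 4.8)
The plan is to set up an explicit nullity-preserving bijection between the matrices in $Q(D(G,M))$ that have the ASAP and the matrices over $G$ used in \cite{MR4658905} to define $Mb_S(G)$ that have the corresponding Strong Arnold property, and then simply compare the two maxima. Write $D:=D(G,M)$.

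First I would recall from \cite{MR4658905} the precise class of matrices behind $Mb_S(G)$: these are $n\times n$ matrices $B=[b_{v_i^-,v_j^+}]$ with rows indexed by $v_1^-,\dots,v_n^-$ and columns by $v_1^+,\dots,v_n^+$, with $b_{v_i^-,v_j^+}\neq 0$ exactly when $v_i^-v_j^+\in E(G)$, and $Mb_S(G)$ is the largest nullity of such a $B$ satisfying that paper's Strong Arnold property. Because $M$ is a perfect matching, every such $B$ has nonzero diagonal $b_{v_i^-,v_i^+}$. Define $\phi(B)=A$ by $a_{i,j}:=b_{v_i^-,v_j^+}$. Since $v_i^-v_j^+\in E(G)$ holds exactly when $i=j$ or $v_iv_j$ is an arc of $D$, the matrix $A=\phi(B)$ is nonzero precisely on the diagonal and on the arcs of $D$; thus $\phi$ is a bijection onto $Q(D)$, and since $A$ and $B$ are the same array up to relabeling, they have the same nullity.

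Next I would verify that $\phi$ matches the ASAP for $A$ with the Strong Arnold property for $B$. Given a matrix $X$ as in the ASAP condition for $A$, let $Z$ be the corresponding array $z_{v_i^-,v_j^+}:=x_{i,j}$ on the $G$-side. Then $X\circ A=0$ says exactly that $z_{v_i^-,v_j^+}=0$ whenever $v_i^-v_j^+\in E(G)$, i.e. that $Z$ is supported on the non-edges of $G$ (in particular $Z$ has zero diagonal, forced by the matching edges); and the pair $X^TA=0$, $AX^T=0$ is, after transposing, exactly $B^TZ=0$ and $ZB^T=0$, which is the normal-space condition to the manifold of matrices of fixed corank at $B$. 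Granting that this is the form (or a reformulation) of the Strong Arnold property used in \cite{MR4658905}, we obtain that $A$ has the ASAP iff $\phi^{-1}(A)$ has that property. Hence $\phi$ restricts to a bijection between $\{A\in Q(D):A\text{ has the ASAP}\}$ and the corresponding family on the $G$-side, and taking maximum nullities on both sides yields $\dw(D)=Mb_S(G)$.

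The substantive point is not the linear algebra but pinning down the conventions in \cite{MR4658905}: that the Strong Arnold property there is indeed ``$Z$ supported on the non-edges of $G$ with $B^TZ=0$ and $ZB^T=0$ forces $Z=0$'', and that the labeling of the two sides of the bipartition as $v_i^-$ versus $v_i^+$, and any normalization of the matching/diagonal entries, are immaterial. The side-labeling ambiguity is harmless: interchanging the two sides replaces $D(G,M)$ by $\overleftarrow{D(G,M)}$ and transposes every matrix, and we have already observed that neither $\dw$ nor the ASAP is disturbed by transposition; rescaling diagonal entries changes neither nullity nor the property. Once these conventions are fixed, the equality falls out of the bijection.
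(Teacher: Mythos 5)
The paper states this theorem without any proof, treating it as immediate from the correspondence $G \leftrightarrow D(G,M)$ it sets up; your explicit nullity-preserving bijection $\phi$, together with the translation of the ASAP into the bipartite Strong Arnold property and the observation that the side-labeling ambiguity only transposes matrices (hence is harmless), is exactly that intended definitional argument made precise. The only caveat, which you flag honestly, is that the argument rests on the conventions of the cited reference matching the form you describe; granting that, the proof is correct.
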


From Theorem~10 and~14 in Arav et al.~\cite{MR4658905} we obtain the following lemma.

\begin{lem}\label{lem:subdigraph}
    Let $H$ be a subdigraph of $D$. Then $\dw(H)\leq \dw(D)$.
\end{lem}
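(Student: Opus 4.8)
The plan is to reduce to two elementary moves and then transport the inequality to bipartite graphs through the identity $\dw(D(G,M)) = Mb_S(G)$ established above. First, I would observe that every subdigraph $H$ of $D$ is obtained from $D$ by a finite sequence of moves, each of which deletes either a single arc or a single vertex (together with all arcs incident to it). Since $\leq$ is transitive, it suffices to prove $\dw(H)\leq\dw(D)$ when $H$ arises from $D$ by one such move.

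Next I would pass through the construction relating digraphs to bipartite graphs with a perfect matching. Write $(G,M)$ for a bipartite graph with perfect matching such that $D = D(G,M)$, so that $\dw(D) = Mb_S(G)$. Deleting the arc $v\to w$ (with $v\neq w$) from $D$ corresponds to deleting the non-matching edge $v^-w^+$ from $G$; deleting the vertex $v$ from $D$, together with all arcs at $v$, corresponds to deleting both vertices $v^-$ and $v^+$ from $G$, equivalently to deleting the matching edge $v^-v^+$ together with its two endpoints --- and $v^-v^+$ always lies in $M$, so this move is legitimate. In either case the resulting bipartite graph $G'$ carries the perfect matching $M' := M\cap E(G')$, and $D(G',M')$ is exactly the subdigraph $H$; hence $\dw(H) = Mb_S(G')$.

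Finally I would invoke Theorems~10 and~14 of Arav et al.~\cite{MR4658905}, which together say that $Mb_S$ does not increase when one deletes a non-matching edge or deletes both endpoints of a matching edge; this yields $Mb_S(G')\leq Mb_S(G)$ in both cases, so $\dw(H) = Mb_S(G')\leq Mb_S(G) = \dw(D)$, as desired. The only point requiring care is the bookkeeping in the second step: one must check that the two digraph-level moves correspond, under $(G,M)\mapsto D(G,M)$, exactly to the two bipartite-graph moves covered by the cited theorems, and in particular that deleting a vertex of $D$ never disturbs the matching --- which is immediate, since the matched edges $v^-v^+$ are in bijection with $V(D)$. A self-contained alternative would avoid the cited paper: starting from a matrix in $Q(H)$ with the ASAP of corank $\dw(H)$, turn the newly required off-diagonal zeros into small nonzero entries and, for a re-added vertex, extend by a large diagonal entry and small entries on the new arcs, then verify that corank and the ASAP survive because the ASAP is an open, transversality-type condition; but the bipartite route is shorter and is exactly what the cited theorems are built for.
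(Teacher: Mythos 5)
Your proposal is correct and follows essentially the same route as the paper, which derives this lemma directly from Theorems~10 and~14 of Arav et al.\ via the correspondence $\dw(D(G,M)) = Mb_S(G)$; your write-up simply makes explicit the reduction to single arc/vertex deletions and their translation into deleting a non-matching edge or both endpoints of a matching edge in $G$. The bookkeeping you flag is exactly the content the paper leaves implicit, and it checks out.
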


An arc in a digraph is \emph{butterfly contractible} if it is the only outgoing or the only incoming one of its endpoints. A digraph $H$ is a \emph{butterfly minor}
of a digraph $D$, if $H$ can be obtained by applying butterfly contractions on a subdigraph of $D$. 

A \emph{bi-directed edge} connecting the vertices $u$ and $w$ in a digraph $D$ is a pair of arcs, one directed from $u$ to $w$ and one directed from $w$ to $u$. \emph{Contracting} a bi-directed edge connecting vertices $u$ and $w$ in a digraph means deleting the arcs between $u$ and $w$, and identifying $u$ and $w$.
In this paper, we extend the notion of butterfly minor by allowing contractions of bi-directed arcs, and call the resulting digraph a \emph{directed minor}.

Let $G$ be a bipartite graph and let $v\in V(G)$ be a vertex with $N_G(v) = \{v_1,v_2\}$ and, for $i=1,2$, let $e_i$ be the edge between $v$ and $v_i$. Let $G'$ be the graph obtained from $G$ by deleting $v$ and identifying the vertices $v_1$ and $v_2$. We say that $G'$ is obtained from $G$ by \emph{bicontracting} $v$; that is, $G'$ is obtained from $G$ by contracting both edges incident with $v$. During this process, multiple edges may appear, but as $G$ is a bipartite graph, then no loops will appear. Observe that $G'$ has a perfect matching if and only if $G$ has a perfect matching, and that  a butterfly contraction of an arc in a digraph $D=D(G, M)$ corresponds to bicontraction of a vertex with degree two in the bipartite graph $G$. 

The following lemma follows directly from Theorem~15 in Arav et al.~\cite{MR4658905}.
\begin{lem}
Let $G$ be a bipartite graph with a perfect matching $M$. Let $G'$ be a bipartite graph obtained from a bipartite graph $G$ by bicontracting a vertex $v$ of degree two. Then $Mb_{S}(G') \leq Mb_{S}(G)$.
\end{lem}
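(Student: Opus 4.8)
The plan is to pass to the digraph side and prove the statement there. Let $v$ be the bicontracted vertex, $G'$ the resulting graph, and $M'$ the perfect matching of $G'$ inherited from $M$ (the matching edge at $v$ is deleted, the two edges at $v$ are merged, and the remaining edges of $M$ are kept). As recorded just before the lemma, $D(G',M')$ is then obtained from $D:=D(G,M)$ by a single butterfly contraction. Applying the theorem asserting $\dw(D(G,M))=Mb_S(G)$ to both $G$ and $G'$, the lemma reduces to the purely digraph-theoretic claim: if $D'$ is obtained from a digraph $D$ by one butterfly contraction, then $\dw(D')\le\dw(D)$.

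For this claim, let $u\to w$ be the contracted arc. Using $\dw(D)=\dw(\overleftarrow{D})$ I may assume $u\to w$ is the only arc leaving $u$ (the case that it is the only arc entering $w$ follows by applying the argument to $\overleftarrow{D}$); write $\bar v$ for the vertex of $D'$ obtained by identifying $u$ and $w$. Given $A\in Q(D')$ with the ASAP and $\operatorname{null}(A)=\dw(D')$, I would build $B\in Q(D)$ by ``un-contracting'' $\bar v$ back into $u$ and $w$: take $B$ to be a matrix whose Schur complement with respect to a freely chosen nonzero pivot $b_{u,u}$ equals $A$. Since row $u$ of $B$ is supported only on the entries $b_{u,u}$ and $b_{u,w}$, spelling out that requirement forces $b_{x,y}=A_{x,y}$ whenever $y\neq w$, forces the value of $b_{x,u}$ for each in-neighbour $x$ of $u$ that is not an in-neighbour of $w$, and leaves one free nonzero parameter for each $x$ that is an in-neighbour of both $u$ and $w$; choosing those parameters generically makes the zero pattern of $B$ exactly that of $Q(D)$. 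Because the pivot $b_{u,u}$ is invertible, $\operatorname{null}(B)=\operatorname{null}(A)=\dw(D')$, so it only remains to check that $B$ has the ASAP.

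The step I expect to be the main obstacle is precisely this: that the ASAP survives the ``inverse Schur complement'' at $u$. The argument should go by assembling, from any $X$ with $X\circ B=0$, $X^TB=0$, and $BX^T=0$, a matrix $Y$ with $Y\circ A=0$, $Y^TA=0$, and $AY^T=0$ — the small support of row and column $u$ of $B$ should pin down the $u$-th row and $u$-th column of $X$ in terms of its remaining entries, collapsing the $u$- and $w$-indices should then produce such a $Y$, the ASAP of $A$ forces $Y=0$, and one unwinds to get $X=0$. Making the bookkeeping around $u$, $w$, and $\bar v$ close up cleanly — and pinning down the generic choices above so that it does — is the delicate part; it is exactly what the proof of Theorem~15 in \cite{MR4658905} handles, and indeed, phrased for bipartite graphs, the present lemma is a direct restatement of that theorem, so in the paper one simply cites it.
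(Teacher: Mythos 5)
The paper does not actually prove this lemma: it is imported wholesale, with the one-line justification that it ``follows directly from Theorem~15 in \cite{MR4658905}.'' Your closing sentence --- that the statement is a direct restatement of that theorem and one simply cites it --- therefore coincides exactly with what the paper does, and if that citation is all you intend, you and the paper agree.

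As a self-contained argument, however, your proposal has a genuine gap, plus a structural problem. Structurally: translating the statement into ``$\dw(D')\le\dw(D)$ when $D'$ is a butterfly contraction of $D$'' is, inside this paper, a translation into Lemma~\ref{lem:butterfly} --- which the paper \emph{deduces from} the present lemma via the identity $\dw(D(G,M))=Mb_S(G)$. The two statements are equivalent under that identity, so the reduction buys nothing; the entire burden falls on your direct proof of the digraph claim. Your construction of $B\in Q(D)$ with $B/B[u]=A$, the correct zero pattern, and $\operatorname{null}(B)=\operatorname{null}(A)$ is plausible (and your parameter count for the entries $b_{x,u}$, $b_{x,w}$ is right), but the decisive step --- that the ASAP of $A$ forces the ASAP of $B$ --- is precisely the content of the lemma, and you explicitly flag it as ``the main obstacle'' and then defer to the very theorem you set out to reprove. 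Until the $X\mapsto Y$ bookkeeping is actually carried out (including verifying $Y\circ A=0$, which is delicate because column $w$ of $B$ and column $\bar v$ of $A$ have different supports, and one must show the collapsed row and column of $X$ vanish after $Y=0$ is obtained), there is no proof. A secondary point: bicontraction can create parallel edges in $G'$ (the paper notes this), and $D(G',M')$ forgets multiplicities; since $Mb_S$ is sensitive to parallel edges (witness the role of $K_{2,2}^=$ as opposed to $K_{2,2}$ later in the paper), the passage to digraphs is not faithful in general, which is presumably why \cite{MR4658905} states and proves the result in the bipartite setting in the first place.
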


Hence we obtain:

\begin{lem}\label{lem:butterfly}
    Let $H$ be obtained from $D$ by a butterfly contraction. Then $\dw(H)\leq \dw(D)$.
\end{lem}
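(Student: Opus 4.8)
The plan is to move the whole statement into the bipartite-graph world, where the corresponding inequality for $Mb_S$ has already been recorded, and then translate back using the dictionary between digraphs equipped with $Q(D)$ and bipartite graphs equipped with a perfect matching. Concretely: fix $D$ and let $G$ be the bipartite graph with perfect matching $M$ associated with $D$ as in Section~2, so that $D=D(G,M)$ and, by the theorem identifying $\dw(D(G,M))$ with $Mb_S(G)$, we have $\dw(D)=Mb_S(G)$. Let $uw$ be the butterfly-contractible arc whose contraction produces $H$. Up to replacing $D$ by $\overleftarrow{D}$ (legitimate since $\dw(D)=\dw(\overleftarrow{D})$ and a butterfly contraction of $D$ corresponds to one of $\overleftarrow{D}$), we may assume $uw$ is the unique incoming arc of $w$.

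Next I would pin down which vertex of $G$ gets bicontracted. In $G$ the arc $uw$ is the edge $u^-w^+$, and the neighbours of $w^+$ are exactly $w^-$ (from the matching edge $w^-w^+$) together with all $x^-$ for which $xw$ is an arc of $D$; since $uw$ is the only incoming arc of $w$, the vertex $w^+$ has degree two with $N_G(w^+)=\{w^-,u^-\}$. Let $G'$ be the graph obtained from $G$ by bicontracting $w^+$, i.e.\ by deleting $w^+$ and identifying $w^-$ with $u^-$; as noted in the text, $G'$ again has a perfect matching $M'$, and no loops are created because $G$ is bipartite. The bookkeeping step is to verify $D(G',M')\cong H$: bicontracting $w^+$ merges the two matching edges $u^-u^+$ and $w^-w^+$ into a single matching edge with the merged vertex $u^-=w^-$ on the one side and $u^+$ on the other, and it reroutes every edge $x^-w^+$ (there is only $u^-w^+=$ the deleted one) and every edge $u^-y^+$ onto the merged vertices; arc by arc this is precisely the effect of deleting the arc $uw$ and identifying $u$ with $w$ in $D$, which is the butterfly contraction giving $H$. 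Hence $\dw(H)=Mb_S(G')$.

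Finally, apply the lemma quoted just before the statement: bicontracting a degree-two vertex cannot increase $Mb_S$, so $Mb_S(G')\le Mb_S(G)$. Chaining the identifications gives $\dw(H)=\dw(D(G',M'))=Mb_S(G')\le Mb_S(G)=\dw(D)$, as desired. The only real work is the isomorphism $D(G',M')\cong H$ — one must check carefully that the relabelling of the two bipartition classes after the bicontraction genuinely reproduces the butterfly-contracted digraph, including how arcs incident to $u$ and $w$ behave and how any parallel arcs or bidirected pairs are created; everything else is a direct citation of the preceding results.
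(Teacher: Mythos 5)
Your proposal is correct and follows exactly the route the paper takes: the paper derives this lemma by combining the observation that a butterfly contraction in $D=D(G,M)$ corresponds to bicontracting a degree-two vertex of $G$, the monotonicity of $Mb_S$ under bicontraction (Theorem~15 of Arav et al.), and the identity $\dw(D(G,M))=Mb_S(G)$. Your write-up is in fact more detailed than the paper's, which states the conclusion with only a "Hence we obtain"; the verification that $D(G',M')\cong H$ and the reduction to the incoming-arc case via $\overleftarrow{D}$ are exactly the details left implicit there.
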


Let $G$ be a graph. We say that a subgraph $H$ of $G$ is \emph{central} if $G\setminus V(H)$ has a perfect matching. 

Let $C$ be a central $4$-cycle in a bipartite graph $G$, with vertices $v_1,v_2,v_3,v_4$ in this order on $C$.  Let $G'$ be obtained from $G$ by removing the edges between $v_1$ and $v_2$, and between $v_3$ and $v_4$, and identifying the vertices $v_1$ and $v_3$, and identifying the vertices $v_2$ and $v_4$. Observe that between the two vertices obtained after identification, there are at least two edges.  We call this operation a $C_4$-contraction. Observe that a $C_4$-contraction $H$ of a bipartite graph $G$ has a perfect matching if and only if $G$ has a perfect matching. Observe that a contraction of a bi-directed edge in a digraph $D=D(G, M)$ corresponds to a $C_4$-contraction and deleting one of the edge between the two vertices after identifying $v_1$ and $v_3$, and $v_2$ and $v_4$, in the bipartite graph $G$. 

From Theorem~16 in Arav et al.~\cite{MR4658905} we obtain the following lemma.
\begin{lem}
Let $G$ be a bipartite with a perfect matching $M$. Let $G'$ be a bipartite graph obtained from a bipartite graph $G$ by a $C_4$-contraction. Then $Mb_{S}(G') \leq Mb_{S}(G)$.
\end{lem}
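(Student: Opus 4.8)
The plan is to deduce the lemma from Theorem~16 of Arav et al.~\cite{MR4658905}, once one checks that the $C_4$-contraction defined above is exactly the operation treated there; I also indicate the mechanism behind that theorem. Recall that $Mb_S(G)$ is the largest nullity of a real matrix whose bipartite zero--nonzero pattern is prescribed by $G$ (a nonzero entry precisely where $G$ has an edge, the perfect matching marking the positions that play the role of the diagonal) and that satisfies the bipartite Strong Arnold Property; under the identification $\dw(D(G,M))=Mb_S(G)$ this property is the ASAP\@. A $C_4$-contraction collapses the two same-side vertices $v_1,v_3$ of a central $4$-cycle into one vertex $v$ and the two same-side vertices $v_2,v_4$ into one vertex $w$, after deleting the edges $v_1v_2$ and $v_3v_4$; in matrix language it merges two rows into one and two columns into one. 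As recorded in the text, this operation keeps the graph in the class of bipartite graphs with a perfect matching, so that $Mb_S(G')$ and its Strong Arnold Property are well posed; once one verifies that the $C_4$-contraction here is the operation Theorem~16 applies to (the centrality of $C$ being the hypothesis used in its proof), the bound $Mb_S(G')\le Mb_S(G)$ is immediate.

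For a self-contained argument one runs the correspondence in the other direction: from a matrix $B'$ for $G'$ realizing $Mb_S(G')$ and satisfying the Strong Arnold Property, build a matrix $B$ for $G$ with $\operatorname{nul}(B)\ge\operatorname{nul}(B')$ that still satisfies the Strong Arnold Property; then $Mb_S(G)\ge\operatorname{nul}(B)\ge Mb_S(G')$. The matrix $B$ is obtained by splitting the merged row of $B'$ into rows indexed by $v_1,v_3$ and the merged column into columns indexed by $v_2,v_4$, reinstating at the split positions exactly the edges of $G$ (so the $2\times 2$ block on $\{v_1,v_3\}\times\{v_2,v_4\}$ carries only the anti-diagonal entries coming from the two retained $4$-cycle edges $v_2v_3$ and $v_4v_1$), and distributing the remaining entries of the split row and of the split column among the two copies; choosing this distribution compatibly, one checks that the nullity does not drop.

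The step I expect to be the real obstacle, and the actual content of Theorem~16, is showing that the Strong Arnold (ASAP) condition survives this blow-up: one must verify that the perturbation space $\{X:\ X\circ B=0,\ X^TB=0,\ BX^T=0\}$ acquires no direction beyond the natural image of the corresponding space for $B'$. This is where the centrality of the $4$-cycle and a genericity choice for the $2\times 2$ block are used, and it is the two-row/two-column analogue of the perturbation-lifting argument behind Lemma~\ref{lem:butterfly}, in which only a single row and column are split. Granting Theorem~16 this has already been carried out, so the proof of the lemma reduces to the bookkeeping described in the first paragraph.
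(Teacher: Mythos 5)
Your proposal matches the paper's treatment: the paper gives no independent argument and simply derives the lemma as an immediate consequence of Theorem~16 in Arav et al.~\cite{MR4658905}, which is exactly your primary plan. The additional sketch of the row/column-splitting mechanism is a reasonable gloss on what that theorem does, but the actual proof in both cases is the citation.
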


Hence we obtain:

\begin{lem}\label{lem:bidirected}
    Let $H$ be obtained from $D$ by contracting a bi-directed edge. Then $\dw(H)\leq \dw(D)$.
\end{lem}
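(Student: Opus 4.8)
The plan is to transport the statement into the bipartite-graph language, where the $C_4$-contraction lemma stated above carries out essentially all of the work. First I would choose a bipartite graph $G$ together with a perfect matching $M$ such that $D=D(G,M)$; such a pair exists by the construction recalled at the start of this section. Say the contracted bi-directed edge joins the vertices $u$ and $w$ of $D$. In $G$ this bi-directed edge is the pair of non-matching edges $u^-w^+$ (coming from the arc $u\to w$) and $w^-u^+$ (coming from the arc $w\to u$), and together with the matching edges $u^-u^+$ and $w^-w^+$ these form a $4$-cycle $C$ on the vertices $u^-,u^+,w^-,w^+$. I would check that $C$ is \emph{central}: $G\setminus V(C)$ still contains every matching edge of $M$ not incident with $u$ or $w$, hence has a perfect matching.

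Next I would apply the $C_4$-contraction lemma above to $C$. Writing $G'$ for the $C_4$-contraction of $G$ at $C$, this gives $Mb_S(G')\le Mb_S(G)$. By the observation recorded above, contracting the bi-directed edge $uw$ of $D$ corresponds, on the bipartite side, exactly to this $C_4$-contraction followed by deleting one of the two parallel edges it creates between the two identified vertices; let $G''$ be the graph so obtained and $M''$ its induced perfect matching, so that $H=D(G'',M'')$. Since $G''$ arises from $G'$ by deleting an edge not in $M''$, and $Mb_S$ does not increase under deletion of a non-matching edge — this is the bipartite-graph form of Lemma~\ref{lem:subdigraph}, available from Theorems~10 and~14 of \cite{MR4658905} — we obtain $Mb_S(G'')\le Mb_S(G')$. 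Combining these with the theorem identifying $\dw(D(G,M))$ with $Mb_S(G)$ yields $\dw(H)=Mb_S(G'')\le Mb_S(G')\le Mb_S(G)=\dw(D)$.

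The only step that needs care is the bipartite/digraph dictionary: one must confirm that $D(G'',M'')$ is exactly the contracted digraph $H$ (deleting that one parallel edge is precisely what removes the would-be arc from the identified vertex to itself), that $M''$ is indeed a perfect matching of $G''$, and that the edge-deletion monotonicity of $Mb_S$ invoked above is genuinely on record in \cite{MR4658905}. All of this is the kind of routine bookkeeping already anticipated by the observation preceding this lemma, so I do not expect a real obstacle beyond getting these identifications and citations right; the substance of the argument is the short chain of inequalities above.
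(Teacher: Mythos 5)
Your proposal is correct and follows essentially the same route as the paper, which derives this lemma directly from the $C_4$-contraction bound on $Mb_S$ together with the stated observation that contracting a bi-directed edge corresponds to a $C_4$-contraction followed by deleting one of the resulting parallel edges (the latter handled by subgraph monotonicity). The paper leaves these steps implicit with a ``hence we obtain,'' so your write-up simply makes the intended argument explicit.
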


From Lemmas~\ref{lem:subdigraph},~\ref{lem:butterfly}, and~\ref{lem:bidirected}, we obtain the following theorem.

\begin{thm}
    Let $H$ be a directed minor of $D$. Then $\dw(H)\leq \dw(D)$.
\end{thm}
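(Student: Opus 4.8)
The plan is to unwind the definition of a directed minor into a finite sequence of elementary operations, each covered by one of Lemmas~\ref{lem:subdigraph},~\ref{lem:butterfly}, and~\ref{lem:bidirected}, and then to compose the resulting inequalities. By definition, if $H$ is a directed minor of $D$, then there is a subdigraph $D_0$ of $D$ and a finite sequence of digraphs
\[
D_0 \to D_1 \to D_2 \to \cdots \to D_k = H,
\]
in which each $D_i$ is obtained from $D_{i-1}$ either by a butterfly contraction (of an arc that is the unique outgoing or the unique incoming arc of one of its endpoints \emph{in $D_{i-1}$}) or by the contraction of a bi-directed edge of $D_{i-1}$. The sequence is finite because each contraction decreases the number of vertices. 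Note that each operation is defined relative to the digraph it is applied to, so performing them one at a time causes no compatibility issues.

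I would then induct on $k$. If $k=0$, then $H=D_0$ is a subdigraph of $D$, and Lemma~\ref{lem:subdigraph} gives $\dw(H)\leq \dw(D)$. For the inductive step, suppose the claim holds whenever the directed minor is obtainable with at most $k-1$ contractions, and let $H=D_k$ be obtained from $D_0$ with $k$ contractions. Then $D_{k-1}$ is a directed minor of $D$ obtainable with $k-1$ contractions, so by the inductive hypothesis $\dw(D_{k-1})\leq \dw(D)$. Now $D_k$ is obtained from $D_{k-1}$ by a single contraction: if it is a butterfly contraction, Lemma~\ref{lem:butterfly} gives $\dw(D_k)\leq \dw(D_{k-1})$, and if it is the contraction of a bi-directed edge, Lemma~\ref{lem:bidirected} gives the same inequality. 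In either case $\dw(H)=\dw(D_k)\leq \dw(D_{k-1})\leq \dw(D)$, completing the induction.

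The only point that needs care is the first sentence, namely that the definition of directed minor really does permit writing such a minor as a subdigraph followed by a sequence of single contractions. Since here a directed minor is defined as the result of butterfly and bi-directed contractions applied to a subdigraph, and since any deletions one might wish to perform after contractions can equally well be performed first (producing merely a smaller subdigraph $D_0$), this presents no difficulty; but it is where the bookkeeping lives, so I would make it an explicit short remark before the induction. Everything else is a routine composition of the three lemmas already established, so I do not expect any genuine obstacle.
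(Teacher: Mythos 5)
Your proposal is correct and follows exactly the paper's approach: the paper derives this theorem directly from Lemmas~\ref{lem:subdigraph},~\ref{lem:butterfly}, and~\ref{lem:bidirected}, and your induction on the number of contractions merely makes explicit the routine composition that the paper leaves implicit.
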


\section{Partial $k$-DAGs}

In \cite{MR2419778}, Hunter and Kreutzer introduced the Kelly-width of a digraph. In this section, we follow their paper.

A \emph{DAG} is an acyclic digraph, that is, a digraph with no directed cycles. Let $T$ be a DAG. If $i$ and $j$ are two distinct nodes of $T$, we write $i\prec j$ if there is a directed walk in $T$ from $i$ to $j$. If $i$ and $j$ are nodes of $T$, we write $i\preceq j$ if either $i=j$ or $i\prec j$.
Let $D$ be a digraph, and let $W, X\subseteq V(D)$. We say that $X$ \emph{guards} $W$ if $W\cap X=\emptyset$, and for all $uv\in E(D)$, if $u\in W$, then $v\in W\cup X$.

A \emph{Kelly-decomposition} of a digraph $D$ is a triple $(T, \mathcal{W}, \mathcal{X})$, where $T$ is a DAG, and $\mathcal{W} = \{W_i\}_{i\in V(T)}$ and $\mathcal{X} = \{X_i\}_{i\in V(T)}$ are families of subsets of $V(D)$ such that
\begin{itemize}
\item $\mathcal{W}$ is a partition of $V(D)$,
\item for all nodes $i\in V(T)$, $X_i$ guards $W_{\preceq i}$, and
\item for each node $i\in V(T)$, the children of $i$ can be enumerated as $j_1,\ldots,j_s$ so that for each $j_q$, $X_{j_q}\subseteq W_i\cup X_i\cup \bigcup_{p<q} W_{\preceq j_p}$. Also, the roots of $T$ can be enumerated as $r_1, r_2,\ldots$ such that for each root $r_q$, $W_{r_q}\subseteq \bigcup_{p<q} W_{\preceq r_p}$.
\end{itemize}
The \emph{width} of a Kelly-decomposition $(T, \mathcal{W}, \mathcal{X})$ is defined as $\max\{|W_i\cup X_i|:i\in V(T)\}$. The \emph{Kelly-width} of $D$ is the minimum width over all possible Kelly-decompositions of $D$.

The class of $k$-DAGs is defined recursively as follows:
\begin{enumerate}
\item A complete digraph with $k$ vertices is a $k$-DAG.
\item A $k$-DAG with $n+1$ vertices can be constructed from a $k$-DAG $H$ with $n$ vertices by adding a vertex $v$ and arcs satisfying:
\begin{itemize}
    \item There are at most $k$ arcs from $v$ to $H$, and
    \item If $X$ is the set of endpoints of the edges added in the previous subcondition, an arc is added from $u\in V(H)$ to $v$ if $uw$ for all $w\in X\setminus \{u\}$. If $X=\emptyset$, then this condition is true by default for all $u\in V(H)$.
\end{itemize}
\end{enumerate}

A \emph{partial $k$-DAG} is a spanning subdigraph of a $k$-DAG.

Let $D=(V,A)$ be a digraph. A \emph{directed elimination ordering} $\lhd$ is a linear ordering on $V$. If $\lhd = (v_1,v_2,\ldots, v_n)$ is a directed elimination ordering of $D$, define 
\begin{itemize}
\item $D_1^\lhd := D$, and 
\item $D_{i+1}^\lhd$ is obtained from $D_i^\lhd$ by deleting $v_i$ and adding new arcs $uv$ if $uv_i, v_iv\in A(D_i^\lhd)$ and $u\not=v$. 
\end{itemize}
The \emph{width} of a directed elimination ordering is the maximum over all $i$ of the out-degree of $v_i$ in $D_i^\lhd$.

\begin{thm}\cite{MR2419778}
Let $D$ be a digraph. Then equivalent are:
\begin{enumerate}
\item $D$ has Kelly-width $\leq k+1$,
\item $D$ has a directed elimination ordering of width $\leq k$, and
%%\item $k+1$ cops have a monotone winning strategy to capture an inert robber, and
\item $D$ is a partial $k$-DAG. 
\end{enumerate}
\end{thm}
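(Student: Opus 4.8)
The plan is to prove the two bridging equivalences $(2)\Leftrightarrow(3)$ and $(1)\Leftrightarrow(2)$, which together give the three‑way equivalence. Two routine facts will be used throughout; write $N^+_G(v)$ for the out‑neighbourhood of $v$ in a digraph $G$. First, directed elimination is monotone under taking spanning subdigraphs: if $H$ is a spanning subdigraph of $D$, then $H_i^\lhd$ is a spanning subdigraph of $D_i^\lhd$ for every $i$ and every ordering $\lhd$, so the width of $\lhd$ on $H$ is at most its width on $D$. Second, eliminating $v_i$ from $D_i^\lhd$ adds precisely the arcs $u\to w$ with $u\to v_i,\ v_i\to w\in A(D_i^\lhd)$ and $u\ne w$, so $N^+_{D_i^\lhd}(v_i)$ is exactly the set of vertices reachable from $v_i$ in $D$ along a directed path whose internal vertices all precede $v_i$ in $\lhd$. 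For $(3)\Rightarrow(2)$, I would list the vertices of a $k$‑DAG $K$ having $D$ as a spanning subdigraph in the order $u_1,\ldots,u_n$ in which they were added, so that $u_1,\ldots,u_k$ induce a complete digraph and each later $u_j$ arrives with a set $X_j$ of at most $k$ out‑arcs into $\{u_1,\ldots,u_{j-1}\}$, together with the arc $u\to u_j$ for exactly those $u$ with $X_j\setminus\{u\}\subseteq N^+_K(u)$. Then the elimination ordering that removes $u_n$ first, $u_{n-1}$ next, and so on, has width $\le k$ on $K$: since $u_n$ is removed before any fill occurs, its out‑degree is $|X_n|\le k$; and for each in‑neighbour $u$ of $u_n$ the arcs $u\to w$ with $w\in X_n\setminus\{u\}$ are already present, so eliminating $u_n$ creates no new arc at all, and $K-u_n$ is again a $k$‑DAG built by the same process. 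One induces down to the complete digraph on $u_1,\ldots,u_k$ (in which every out‑degree is $k-1$), and restricting the ordering to $D$ and invoking monotonicity yields $(2)$.

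For $(2)\Rightarrow(3)$, I would reverse this construction. Given a width‑$\le k$ elimination ordering $\lhd=(v_1,\ldots,v_n)$ of $D$ (we may assume $n\ge k$), build a digraph $K$ by adding the vertices in the order $v_n,v_{n-1},\ldots,v_1$: first complete $\{v_{n-k+1},\ldots,v_n\}$ to a complete digraph, and when adding $v_i$ for $i\le n-k$, give $v_i$ out‑arcs exactly to $X_i:=N^+_{D_i^\lhd}(v_i)$ (a subset of $\{v_{i+1},\ldots,v_n\}$ of size $\le k$) together with the in‑arcs dictated by the $k$‑DAG rule. By construction $K$ is a $k$‑DAG, so it remains to check that $D$ is a spanning subdigraph of $K$; writing $K_i$ for the $k$‑DAG obtained once $v_i$ has been added, I would prove $D_i^\lhd\subseteq K_i$ by downward induction on $i$, the base case $i=n-k+1$ being immediate since $D_{n-k+1}^\lhd$ lives on the $k$ vertices $v_{n-k+1},\ldots,v_n$ and $K_{n-k+1}$ is complete on them. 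The only delicate point is the in‑arcs: if $u\to v_i\in A(D_i^\lhd)$, then eliminating $v_i$ from $D_i^\lhd$ produces $u\to w$ for every $w\in X_i\setminus\{u\}$, so the induction hypothesis gives $X_i\setminus\{u\}\subseteq N^+_{D_{i+1}^\lhd}(u)\subseteq N^+_{K_{i+1}}(u)$, which is exactly the condition under which the $k$‑DAG rule places $u\to v_i$ into $K_i$; the same computation applied to an original arc $v_j\to v_i$ of $D$ with $j>i$ shows it lies in $K$, while out‑arcs of $v_i$ in $D$ lie in $X_i$ by the first routine fact. Hence $D\subseteq K$.

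It remains to treat $(1)\Leftrightarrow(2)$. The direction $(1)\Rightarrow(2)$ is the more straightforward one: given a Kelly‑decomposition $(T,\mathcal W,\mathcal X)$ of width $\le k+1$, I would traverse $T$ from its sinks toward its roots, eliminating at each node $i$ the vertices of $W_i$ in an arbitrary order. The enumeration conditions on the children of each node and on the roots let the subtrees be interleaved so that, at the moment a vertex $v\in W_i$ is eliminated, every out‑neighbour of $v$ in the current fill graph lies in $(W_i\cup X_i)\setminus\{v\}$ --- this uses that $X_i$ guards $W_{\preceq i}$ --- whence $v$ has out‑degree at most $|W_i\cup X_i|-1\le k$. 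For $(2)\Rightarrow(1)$ I would build the decomposition DAG $T$ so that its branching reflects how the residual digraphs $D_i^\lhd$ split into pieces: group a maximal run of consecutively eliminated vertices that still reach one another into a single bag $W_i$, take $X_i$ to be the out‑boundary of $W_{\preceq i}$, and branch whenever the residual digraph disconnects; the width bound on $\lhd$ is exactly what keeps $|W_i\cup X_i|\le k+1$. I expect this last direction to be the main obstacle: a width‑$\le k$ elimination ordering need not have all of its prefixes bounded by $k$ (two disjoint arcs already show this for $k=1$), so one cannot simply turn the ordering into a path‑shaped decomposition. The real work is to reorganise the ordering --- equivalently, to run the $(k+1)$‑cop inert‑robber game and read the decomposition off a monotone winning cop strategy --- so that the residual digraph is handled one piece at a time; the enumeration conditions in the definition of a Kelly‑decomposition are precisely the bookkeeping device that makes this go through.
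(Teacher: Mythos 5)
The paper does not prove this theorem: it is quoted verbatim from Hunter and Kreutzer \cite{MR2419778}, so there is no in-paper argument to compare yours against and the comparison has to be with the cited source. Your treatment of $(2)\Leftrightarrow(3)$ is essentially the standard (and correct) one: reverse the construction order of the $k$-DAG to obtain an elimination ordering, noting that eliminating the last-added vertex creates no fill because each of its in-neighbours already dominates its out-neighbourhood; and conversely rebuild a $k$-DAG from the sets $X_i=N^+_{D_i^\lhd}(v_i)$ via the downward induction $D_i^\lhd\subseteq K_i$. Your sketch of $(1)\Rightarrow(2)$ also names the right ingredients (guarding together with the enumeration condition on children and roots), though the actual interleaved elimination order is left implicit.

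The genuine gap is $(2)\Rightarrow(1)$. The direct construction you propose --- putting a maximal run of consecutively eliminated, mutually reaching vertices into a single bag $W_i$ and branching when the residual digraph disconnects --- does not work as stated: nothing bounds the length of such a run, so $|W_i\cup X_i|\leq k+1$ can fail even though every individual vertex has out-degree $\leq k$ in its residual digraph. The standard decomposition instead uses singleton bags $W_{v}=\{v\}$ with $X_v=N^+_{D_v^\lhd}(v)$, which trivially gives the width bound; the entire difficulty is in defining the DAG $T$ so that $X_v$ guards $W_{\preceq v}$ and so that the children and roots admit the required enumeration. Hunter and Kreutzer do this by passing through the inert-robber game (elimination ordering $\Rightarrow$ robber-monotone winning strategy for $k+1$ cops $\Rightarrow$ Kelly-decomposition), and that implication is the technical heart of their paper. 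You correctly identify this as the obstacle and point to the game, but you carry out neither the game argument nor a corrected direct construction, so this direction remains unproved in your write-up.
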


Kintali and Zhang \cite{KINTALI201740}  proved the following lemma.
\begin{lem}
    If a digraph $D$ has Kelly-width $\leq m$ and $H$ is a directed minor of $D$, then $H$ has Kelly-width $\leq m$.
\end{lem}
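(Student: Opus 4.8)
The plan is to reduce the statement to single minor operations and then argue with directed elimination orderings. A directed minor of $D$ is obtained from $D$ by a finite sequence of operations, each a vertex deletion, an arc deletion, a butterfly contraction, or a contraction of a bi-directed edge; so by induction on the length of the sequence it suffices to show that each single operation, applied to a digraph of Kelly-width at most $m$, yields a digraph of Kelly-width at most $m$. By the quoted characterization of Kelly-width via directed elimination orderings (the theorem of Hunter and Kreutzer), this is the same as showing that each operation does not increase the minimum width of a directed elimination ordering, so I work with such orderings and write $k := m-1$. The one tool needed is the standard description of the fill-in: if $\lhd=(v_1,\dots,v_n)$ is a directed elimination ordering, then $D_i^\lhd$ has an arc from $a$ to $b$ (with $a,b\in\{v_i,\dots,v_n\}$) exactly when $D$ has a directed walk from $a$ to $b$ all of whose internal vertices lie in $\{v_1,\dots,v_{i-1}\}$; in particular the out-degree of $v_i$ in $D_i^\lhd$ counts the vertices reachable from $v_i$ by such walks. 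Deletions are then immediate: keep $\lhd$ restricted to the surviving vertices; deleting arcs or vertices only destroys walks, hence only removes fill-in arcs, so no vertex's out-degree at its elimination step increases, up to the harmless relabelling of positions caused by a deleted vertex.

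For a contraction, let $e$ join $u$ and $v$ and let $w$ be the vertex they become, and fix a directed elimination ordering $\lhd$ of $D$ of width at most $k$. The idea is to delete from $\lhd$ whichever of $u,v$ is eliminated first, rename the other endpoint to $w$, and check via the fill-in description that the resulting ordering $\lhd_H$ of $H$ still has width at most $k$. The verification rests on the fact that a directed walk through $w$ in $H$ corresponds to a directed walk in $D$ passing through $\{u,v\}$ and traversing an arc of $e$, and that arc is available in $D$ with the deleted endpoint serving as an internal vertex precisely because it is eliminated first. For a bi-directed edge this is unconditional, since both $u\to v$ and $v\to u$ lie in $D$: whichever way a walk must cross $w$ in $H$, one of the two arcs of $e$ realizes the corresponding crossing of $\{u,v\}$ in $D$. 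The same computation works for a butterfly-contractible arc in the ``compatible'' configuration -- for instance when $N^+_D(u)=\{v\}$ (so the walks leaving $w$ are exactly those leaving $v$), or when $N^-_D(v)=\{u\}$ and $v$ is eliminated before $u$.

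The hard case is a butterfly contraction with, say, $N^-_D(v)=\{u\}$ but $u$ eliminated before $v$: here a walk leaving $w$ in $H$ may leave through an out-neighbour of $u$ that is unreachable from $v$ in $D$, and the naive relabelling can increase the width. What is needed is to first replace $\lhd$ by another width-at-most-$k$ directed elimination ordering in which $v$ precedes $u$; the obstacle is that the obvious reorderings -- sliding $v$ to just before $u$, or $u$ to just after $v$ -- can blow up the out-degree of the moved vertex, since vertices that had been eliminated (and therefore uncounted) become not-yet-eliminated. Overcoming this, or equivalently running the whole argument with Kelly-decompositions (where the analogous step is to normalize the decomposition so that $u$ and $v$ share a bag before being merged), is the crux. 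Since this is exactly the lemma of Kintali and Zhang, one may alternatively simply invoke \cite{KINTALI201740}.
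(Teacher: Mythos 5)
The paper does not actually prove this lemma: it is quoted verbatim from Kintali and Zhang \cite{KINTALI201740}, so your closing remark that one may ``simply invoke'' that reference is exactly what the authors do. Judged as a self-contained argument, however, your proposal has a real gap, and you have located it precisely yourself. The deletion cases, the bi-directed contraction, and the butterfly contraction in the compatible configurations ($N^+_D(u)=\{v\}$, or $N^-_D(v)=\{u\}$ with $v$ eliminated first) are handled correctly by the fill-in characterization of $D_i^\lhd$ via directed walks whose internal vertices are already eliminated. But the remaining case --- contracting $uv$ with $N^-_D(v)=\{u\}$, $N^+_D(u)\neq\{v\}$, and $u$ eliminated before $v$ --- is not closed. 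Placing the merged vertex $w$ in $v$'s slot fails because a walk in $H$ may leave $w$ through an out-neighbour of $u$ that is unreachable from $v$ in $D$ (there is no arc $v\to u$ to route through); placing $w$ in $u$'s slot fails because walks leaving $w$ through $N^+_D(v)$ would need $v$ as an internal vertex before $v$ has been eliminated; and, as you note, naively reordering $\lhd$ so that $v$ precedes $u$ can increase the out-degree of the moved vertex, since endpoints formerly absorbed as eliminated become counted. So the ``crux'' is genuinely unproved in your write-up, not merely deferred for brevity: the reordering or decomposition-normalization step is where the actual content of Kintali and Zhang's proof lies. Either carry that step out in full, or present the lemma as a cited result as the paper does; as written, the proposal is an honest but incomplete sketch.
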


In this paper, we prove that the class of digraphs $D$ with $\dw(D)\leq 1$ is equal to the class of digraphs $D$ such that both $D$ and $\overleftarrow{D}$ have Kelly-width $\leq 2$.

\section{Characterizations}

Following \cite{KINTALI201740}, we denote, for a positive integer $n$, by $K_n$ the digraph on $n$ vertices with between each pair of distinct vertices $i,j$, an arc from $i$ to $j$ and an arc from $j$ to $i$.

\begin{lem}\label{lem:K2directedcycle}\cite{KINTALI201740}    
A digraph $D$ has Kelly-width $\leq 1$ if and only if it contains no $K_2$-minor.
\end{lem}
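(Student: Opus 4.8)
The plan is to first collapse the condition ``$D$ has Kelly-width $\leq 1$'' to plain acyclicity, and then prove the biconditional $D$ \emph{acyclic} $\iff$ $D$ has \emph{no} $K_2$-minor. By the Hunter--Kreutzer theorem quoted above, taken with $k=0$, a digraph has Kelly-width $\leq 1$ if and only if it admits a directed elimination ordering of width $\leq 0$, equivalently if and only if it is a partial $0$-DAG. Unwinding the recursive definition of a $k$-DAG at $k=0$ shows the $0$-DAGs are precisely the transitive tournaments (each added vertex must be a new sink), so a partial $0$-DAG is exactly a spanning subdigraph of a transitive tournament, which is exactly an acyclic digraph; one can also see this directly from the elimination ordering, since width $0$ means every vertex removed is a sink of the current digraph, which is possible throughout precisely when $D$ is acyclic.

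For the direction ``$K_2$-minor $\Rightarrow$ not acyclic'': if $D$ had Kelly-width $\leq 1$ and $K_2$ were a directed minor of $D$, then by the Kintali--Zhang directed-minor lemma $K_2$ would have Kelly-width $\leq 1$; but $K_2$ is a directed $2$-cycle, hence not acyclic, hence has Kelly-width $\geq 2$, a contradiction. (Alternatively, and without invoking that lemma, one proves by induction on the number of minor operations that a directed minor of $D$ containing a directed cycle forces $D$ itself to contain one: deletion is trivial, and for a butterfly or a bi-directed contraction a short case analysis re-routes a directed cycle through the identified vertex into a directed closed walk of the pre-image.)

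For the converse, ``directed cycle $\Rightarrow$ $K_2$-minor'': pick a directed cycle $v_1 \to v_2 \to \cdots \to v_\ell \to v_1$ of $D$ and pass to the subdigraph $H_0$ on $\{v_1,\dots,v_\ell\}$ that keeps only the $\ell$ cycle-arcs. Then $H_0$ is a directed $\ell$-cycle; if $\ell = 2$ it is $K_2$, and if $\ell \geq 3$ every arc of $H_0$ is the unique out-arc of its tail and the unique in-arc of its head, hence butterfly-contractible, and contracting one arc produces a directed $(\ell-1)$-cycle. Iterating down to length $2$ exhibits $K_2$ as a directed minor of $D$.

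The step I expect to be the main obstacle is the ``pull-back of directed cycles'' underlying the first direction (whether one cites Kintali--Zhang or argues directly): one must check every way a directed cycle in the contracted digraph can meet the identified vertex and confirm that the induced walk in the original digraph still closes into a directed cycle. The bi-directed case uses crucially that the identified pair is joined in both directions, and the butterfly case uses that the contracted arc is the only out-arc (or only in-arc) of one of its endpoints, which is exactly what prevents a contraction from creating a spurious shortcut and hence a new directed cycle.
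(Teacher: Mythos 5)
The paper offers no proof of this lemma at all --- it is imported verbatim from Kintali and Zhang \cite{KINTALI201740} --- so there is no in-paper argument to compare yours against; what can be said is that your self-contained proof is correct. Your reduction of ``Kelly-width $\leq 1$'' to acyclicity is exactly the $k=0$ instance of the Hunter--Kreutzer equivalence quoted in the paper, and both of your justifications (width-$0$ elimination orderings are precisely iterated sink removals; $0$-DAGs are precisely transitive tournaments, whose spanning subdigraphs are the acyclic digraphs) check out. Your forward direction via the Kintali--Zhang minor-monotonicity lemma is clean, and the alternative direct pull-back of a directed cycle through a butterfly or bi-directed contraction is sound as sketched: in the butterfly case the ``only out-arc'' (or ``only in-arc'') condition forces the lifted walk to pass through the contracted arc itself, and in the bi-directed case the two antiparallel arcs supply the needed connector, so one always recovers a closed directed walk and hence a directed cycle in $D$. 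Your converse --- restricting to the arcs of a shortest available directed cycle, noting every arc of a directed $\ell$-cycle with $\ell\geq 3$ is butterfly-contractible, and contracting down to $K_2$ --- is the standard argument; it is in fact the content of the sentence ``It is clear that $D$ has no $K_2$ as a directed minor if and only if $D$ has no directed cycle'' that the paper asserts without proof inside its theorem characterizing $\dw(D)=0$, so your write-up supplies details the paper elides in two separate places. The only cosmetic remark is that when $\ell=2$ the restricted subdigraph is already $K_2$ with no contraction needed, which you do note.
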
 

\begin{lem}
    $\dw(K_2)=1$.
\end{lem}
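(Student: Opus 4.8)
The plan is to exploit the fact that for the digraph $K_2$ on vertices $\{1,2\}$ every entry of any matrix in $Q(K_2)$ is forced to be nonzero: the two diagonal entries by condition~(1), and the two off-diagonal entries because $K_2$ has an arc from $1$ to $2$ and from $2$ to $1$. Consequently the Hadamard condition in the definition of ASAP becomes vacuous: if $A\in Q(K_2)$ and $X\circ A=0$, then, $A$ having all entries nonzero, $X=0$. Hence \emph{every} $A\in Q(K_2)$ has the ASAP, and $\dw(K_2)$ is simply the maximum nullity attained over $Q(K_2)$.

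Next I would establish the upper bound $\dw(K_2)\leq 1$. Since every $A\in Q(K_2)$ has all four entries nonzero, $A\neq 0$, so $\operatorname{rank}(A)\geq 1$ and therefore the nullity of $A$ is at most $2-1=1$.

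For the matching lower bound, I would exhibit a singular matrix in $Q(K_2)$: take
\[
A=\bmtx{1 & 1 \\ 1 & 1}.
\]
All entries are nonzero, so $A\in Q(K_2)$, and $A$ has nullity $1$. By the observation in the first paragraph, $A$ has the ASAP, so $\dw(K_2)\geq 1$. Combining the two bounds gives $\dw(K_2)=1$.

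There is no real obstacle here; the only point worth flagging is the first observation, namely that for $K_2$ the ASAP imposes no restriction at all because the entrywise condition $X\circ A=0$ already forces $X=0$. Once that is noted, the statement reduces to the elementary fact that the maximum nullity of a nonzero $2\times 2$ matrix is $1$.
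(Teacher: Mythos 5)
Your proof is correct and takes essentially the same approach as the paper: exhibit the all-ones $2\times 2$ matrix, note it has the ASAP, and combine with the trivial upper bound on nullity. The only difference is that you spell out why the ASAP holds (the condition $X\circ A=0$ alone forces $X=0$ since every entry of $A$ is nonzero), a justification the paper leaves implicit.
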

\begin{proof}
    Let $A$ be the $2\times 2$ all-ones matrix. Then $A$ has the ASAP\@. Hence $\dw(K_2)>0$. Since, clearly, $\dw(K_2)\leq 1$, we obtain $\dw(K_2)=1$.
\end{proof}

\begin{thm}
    Let $D$ be a digraph. The following are equivalent:
    \begin{enumerate}
        \item $D$ has no directed cycles, 
        \item $D$ has Kelly-width $1$, and
        \item $\dw(D)=0$.
    \end{enumerate}
\end{thm}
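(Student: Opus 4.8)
The plan is to prove the three-way equivalence by establishing the cycle $(1)\Rightarrow(2)\Rightarrow(3)\Rightarrow(1)$, leaning on the machinery already set up.

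For $(1)\Rightarrow(2)$: if $D$ has no directed cycles, then $D$ is a DAG, so it has a topological order $v_1,\ldots,v_n$ with all arcs going ``backward'' (from later to earlier, say). Eliminating vertices in an order that respects the topological sort, each $v_i$ has out-degree $0$ in $D_i^\lhd$ (since no arc leaves $v_i$ toward a not-yet-eliminated vertex, and eliminating earlier vertices only adds arcs among their in-neighbors, which are also topologically later — one has to check that fill-in arcs never point ``forward'' either, but this is exactly the standard fact that directed elimination preserves acyclicity). Hence the directed elimination ordering has width $0$, so by the cited theorem $D$ has Kelly-width $\leq 1$; the reverse inclusion (Kelly-width $\leq 1$ implies DAG) also follows since a Kelly-width $\leq1$ digraph has no $K_2$-minor by Lemma~\ref{lem:K2directedcycle}, and a directed cycle contracts (via butterfly contractions, since every vertex of a directed cycle has in-degree and out-degree exactly $1$) to $K_2$. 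This last observation I would state cleanly as: \emph{every directed cycle has $K_2$ as a butterfly minor}, and conversely a digraph with a directed cycle contains one as a subdigraph, hence has a $K_2$-minor.

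For $(3)\Rightarrow(1)$, equivalently ``$D$ has a directed cycle $\Rightarrow\dw(D)\geq1$'': a directed cycle has $K_2$ as a directed minor, and $\dw(K_2)=1$ by the preceding lemma, so by the minor-monotonicity theorem (Lemma~\ref{lem:subdigraph} plus Lemma~\ref{lem:butterfly}) $\dw(D)\geq\dw(K_2)=1$. That gives the contrapositive. For $(2)\Rightarrow(3)$, I would argue directly that if $D$ is acyclic then every $A\in Q(D)$ is invertible: order the vertices topologically so that $A$ is (permutation-similar to) a triangular matrix with nonzero diagonal, hence $\det A\neq 0$, so $A$ has nullity $0$; since this holds for \emph{every} $A\in Q(D)$, in particular for those with the ASAP, $\dw(D)=0$. (One should note $\dw(D)\geq0$ always, and that $Q(D)$ contains an ASAP matrix, which is remarked in the introduction.) Combined with $(3)\Rightarrow(1)$ and $(1)\Rightarrow(2)$ this closes the loop. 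Note this also shows $(1)$ and $(2)$ are each equivalent to ``$D$ contains no directed cycle'', so the Kelly-width is automatically $1$ rather than $\leq 1$ once $D$ is nonempty — though one may wish to treat the edgeless digraph, where Kelly-width is $1$ by the base case of a $0$-DAG being a single vertex, as a trivial boundary case.

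The main obstacle is the fill-in bookkeeping in $(1)\Rightarrow(2)$: one must verify that directed elimination applied in topological order never creates an arc that violates acyclicity and never creates an outgoing arc at the vertex being eliminated. This is intuitively clear — the new arc $uv$ replaces the path $u\to v_i\to v$, so $u$ and $v$ were already ``on the same side'' of $v_i$ in the topological order — but it deserves an explicit inductive statement that $D_i^\lhd$ is acyclic with the same topological order restricted to its vertices. Everything else is either a direct citation (the Kelly-width/elimination-ordering equivalence, minor-monotonicity of $\dw$, $\dw(K_2)=1$) or the elementary triangularization argument. I would organize the write-up as: first the lemma that directed cycles and $K_2$ have the same minor-closure behavior, then the topological-elimination computation, then the determinant argument, then assemble the cycle.
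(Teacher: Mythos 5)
Your proposal is correct, and it diverges from the paper's proof in one substantive place: the implication that acyclicity forces $\dw(D)=0$. The paper proves the contrapositive by following supports: if some $A\in Q(D)$ has a nonzero right null vector $x$, then for each $v_1\in\supp(x)$ the equation $\sum_w a_{v_1,w}x_w=0$ together with $a_{v_1,v_1}x_{v_1}\neq 0$ forces an out-arc $v_1v_2$ with $x_{v_2}\neq 0$, and finiteness closes this walk into a directed cycle. Your triangularization argument (permute $A$ by a topological order, observe it is triangular with nonzero diagonal, hence nonsingular) reaches the same conclusion --- indeed the same stronger conclusion, that \emph{every} matrix in $Q(D)$ is nonsingular when $D$ is acyclic, since neither argument uses the ASAP --- and is arguably more immediate, while the paper's version has the virtue of explicitly exhibiting the cycle. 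For the remaining pieces you and the paper coincide: $(3)\Rightarrow(1)$ via $\dw(K_2)=1$ and minor-monotonicity, and the link between Kelly-width $\leq 1$ and acyclicity via the $K_2$-minor lemma together with the observation that a directed cycle butterfly-contracts to $K_2$. Your extra direct proof of $(1)\Rightarrow(2)$ by a width-$0$ elimination ordering is sound but redundant given that lemma, and the fill-in bookkeeping you flag as the ``main obstacle'' is actually vacuous: if you eliminate vertices so that each is a sink in the remaining digraph, the fill-in rule (add $uv$ when $uv_i$ \emph{and} $v_iv$ are present) never fires, since the eliminated vertex has no outgoing arc at all; so $D_i^\lhd$ is just the induced subdigraph on the surviving vertices and no inductive acyclicity statement is needed.
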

\begin{proof}
  It is clear that $D$ has no $K_2$ as a directed minor if and only if $D$ has no directed cycle.

Suppose $\dw(D)>0$. Let $A\in Q(D)$ with nullity $>0$ that has the ASAP\@. Let $x$ be a non-zero vector in the right null space of $A$; let $v_1\in \supp(x)$. Since $a_{v_1,v_1}\not=0$, there exists an arc $v_1v_2$ from $v_1$ to $v_2$ such that $x_{v_2}\not=0$. As $D$ is finite, repeating this with $v_2$ instead of $v_1$ leads to a directed cycle.

    Suppose now that $\dw(D)=0$ and $D$ has a directed cycle. Then $D$ contains a $K_2$ as a directed minor. As $\dw(K_2)=1$, we obtain a contradiction. 
    
By Lemma~\ref{lem:K2directedcycle}, a digraph $D$ has Kelly-width $\leq 1$ if and only if $D$ has no $K_2$-minor.
\end{proof}

Let $K_{2,2}^=$ be the graph obtained from $K_{2,2}$ by adding an edge parallel to each edge.

\begin{lem}\cite{MR4658905}
$Mb_S(K_{2,2}^=) = 2$.
\end{lem}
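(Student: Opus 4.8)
The plan is to prove the inequalities $Mb_S(K_{2,2}^=)\le 2$ and $Mb_S(K_{2,2}^=)\ge 2$ separately. In both cases the decisive feature of $K_{2,2}^=$ is that each vertex of one part is joined to each vertex of the other part by a \emph{double} edge: no position is missing, and the two parallel copies of an edge relax the corresponding matrix-entry constraint from ``nonzero'' to ``arbitrary''.

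The upper bound is immediate: the matrices defining $Mb_S(K_{2,2}^=)$ have one row and one column for each of the two vertices of a fixed part, so they are $2\times 2$, and the nullity of a $2\times 2$ matrix is at most $2$. (Equivalently, the digraph $D(K_{2,2}^=,M)$ associated with $K_{2,2}^=$ has only two vertices.)

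For the lower bound I would exhibit a $2\times 2$ matrix $A$ of nullity $2$ that lies in the class defining $Mb_S(K_{2,2}^=)$ and has the stability (Strong-Arnold-type) property. Since every position is a double edge, all four entries of $A$ are unconstrained, so one may take $A$ to be the zero matrix, and then the nullity of $A$ equals $2$. It remains to verify the stability condition, and here the abundance of edges is exactly what helps: the pattern condition $X\circ A=0$ forces an admissible perturbation $X$ to vanish at every position that is an edge, and since $K_{2,2}^=$ has an edge in \emph{every} position this already gives $X=0$, so the remaining conditions $X^{T}A=0$ and $AX^{T}=0$ hold vacuously. This yields $Mb_S(K_{2,2}^=)\ge 2$.

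The one genuinely delicate point is the bookkeeping around the definition of $Mb_S$: one must check that replacing a single edge by a pair of parallel edges makes the associated entry free --- so that the zero matrix becomes admissible --- while leaving the stability condition as restrictive on the pattern. This is precisely what separates $K_{2,2}^=$ from the simple graph $K_{2,2}$, for which $Mb_S(K_{2,2}) = \dw(K_2) = 1$ since there every entry must be nonzero. On the digraph side the same phenomenon reads: $D(K_{2,2}^=,M)$ is $K_2$ with a loop added at each vertex --- the loops arising from the second copies of the two matching edges --- so the loops free the diagonal and the doubled arcs free the off-diagonal, leaving every entry unconstrained. Once these definitions are pinned down, the Strong-Arnold verification, usually the hard step in arguments of this kind, is automatic, so there is essentially no obstacle beyond the translation; the full treatment of $Mb_S$ is carried out in Arav et al.~\cite{MR4658905}.
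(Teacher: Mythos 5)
The paper gives no proof of this lemma --- it is quoted from Arav et al.~\cite{MR4658905} --- so there is nothing internal to compare against; your argument is the natural one and, granted the definitions from that reference, it is correct. Both halves are sound: the matrices realizing $Mb_S(K_{2,2}^=)$ are $2\times 2$, giving the upper bound, and since every position of $K_{2,2}^=$ carries a double edge, every entry is free, so the zero matrix is admissible with nullity $2$, while the stability condition forces an admissible perturbation $X$ to vanish at every edge position --- here, everywhere --- and so holds vacuously. The one point to state carefully is the one you flag yourself: the stability hypothesis cannot literally be ``$X\circ A=0$'' (with $A=0$ that constrains nothing); it must be the combinatorial condition that $X$ vanishes at every position covered by at least one edge, which is exactly what makes $Mb_S(K_{2,2}^=)=2$ while the simple graph has $Mb_S(K_{2,2})=1$, consistent with $\dw(K_2)=1$ in this paper.
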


Let $N_4$ be the digraph with vertex-set $V=\{a,b,c,d\}$ and arc-set $A=\{ab,ba,bc,cb,cd,dc,ac,db\}$, and let $M_5$ be the digraph with vertex-set $V=\{a,b,c,d,e\}$ and arc-set $A=\{ab,ba,bc,cb,cd,dc,de,ed,ac,ec\}$. Here, if $v$ and $w$ are vertices, then $vw$ denotes the arc from $v$ to $w$. See \cite{KINTALI201740} for pictures of these two digraphs.

\begin{lem}\label{lem:kwforbidden}
    $\dw(K_3)=2$, $\dw(N_4) \geq 2$, $\dw(M_5)\geq 2$,
    $\dw(\overleftarrow{N_4}) \geq 2$, $\dw(\overleftarrow{M_5})\geq 2$.
\end{lem}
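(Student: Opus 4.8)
The plan is to exhibit, for each of the five digraphs, an explicit matrix in the corresponding $Q(\cdot)$ class with nullity $\geq 2$ that satisfies the ASAP, or — more efficiently — to reduce everything to one or two base computations using the lemmas already available. The cleanest route is to work on the bipartite side via Theorem~6 ($\dw(D(G,M)) = Mb_S(G)$) together with the already-quoted fact $Mb_S(K_{2,2}^=)=2$. First I would verify that $K_3$ is exactly $D(K_{2,2}^=, M)$ for a suitable perfect matching $M$: indeed $K_3$ has $3$ vertices, each with a loop-term in the matrix, so the associated bipartite graph has $6$ vertices and $3$ matching edges; the off-diagonal nonzeros of a matrix in $Q(K_3)$ correspond to the six arcs of $K_3$ (all ordered pairs), which one checks gives the multigraph $K_{2,2}^=$ after the matching edges are removed — actually one should double-check the vertex count, since $K_{2,2}^=$ has only $4+4$ incidences per side; so more likely $K_3$ corresponds to a slightly larger bipartite graph, and the right move is to find a central $K_{2,2}^=$ subgraph inside it and invoke Lemma~\ref{lem:subdigraph} (monotonicity under subdigraphs) after translating back. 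So: I claim $K_3$ contains, as a directed minor or subdigraph, a digraph whose bipartite model has $K_{2,2}^=$ as a central subgraph, giving $\dw(K_3)\geq 2$; combined with $\dw(K_3)\leq |V|-1 = 2$ this yields $\dw(K_3)=2$.

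For $N_4$ and $M_5$, the natural approach is to show each contains $K_3$ (or $K_{2,2}^=$ on the bipartite side) as a directed minor, and then apply Theorem~11 (directed-minor monotonicity of $\dw$) to conclude $\dw(N_4)\geq \dw(K_3) = 2$ and likewise for $M_5$. Concretely, in $N_4$ the vertices $a,b,c$ carry the bi-directed edges $ab,ba$ and $bc,cb$ plus the arc $ac$; contracting the bi-directed edge $\{a,b\}$ (Lemma~\ref{lem:bidirected}) merges $a$ and $b$, and one checks the resulting $3$-vertex digraph on $\{ab, c, d\}$ retains enough arcs — in particular $db$ becomes an arc into the merged vertex and $cd,dc$ survive — so that it contains $K_2$'s between every pair, i.e.\ $K_3$; if not exactly $K_3$ then at least a digraph $\geq K_3$ in the directed-minor order. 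A similar contraction (say of $\{d,e\}$ or $\{b,c\}$) inside $M_5$ should collapse it onto a copy of $N_4$ or directly onto $K_3$. I would carry out these two contractions explicitly, check arc by arc that the required arcs survive, and cite Theorem~11.

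Finally, the statements about $\overleftarrow{N_4}$ and $\overleftarrow{M_5}$ are immediate from Lemma~2 ($\dw(D) = \dw(\overleftarrow{D})$): reversing all arcs preserves $\dw$, so $\dw(\overleftarrow{N_4}) = \dw(N_4)\geq 2$ and $\dw(\overleftarrow{M_5}) = \dw(M_5)\geq 2$, with no further work. The main obstacle I anticipate is the first step: pinning down precisely which bipartite graph corresponds to $K_3$ (or more generally to a small dense digraph) and confirming that $K_{2,2}^=$ sits inside it as a \emph{central} subgraph so that Lemma~\ref{lem:subdigraph} and the $Mb_S$ value can be applied — the bookkeeping between arcs of $D$ and edges of $G=D^{-1}(\cdot)$, and in particular which edges are in the matching, is where an off-by-one or a mis-identified central subgraph could creep in. Once that base case $\dw(K_3)=2$ is nailed down, the rest is routine minor-finding plus the reversal lemma.
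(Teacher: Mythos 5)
Your handling of $\overleftarrow{N_4}$ and $\overleftarrow{M_5}$ via $\dw(D)=\dw(\overleftarrow{D})$ is exactly what the paper does. The rest of the plan, however, has two genuine gaps. First, the base case: the bipartite graph of $K_3$ is the simple graph $K_{3,3}$ (nine edges), and $K_{2,2}^=$ (eight edges, every edge doubled) can be neither a central subgraph of it (no parallel edges are available) nor a matching minor of it: a bicontraction of a degree-$2$ vertex drops the edge count by exactly two, so from at most nine edges on six vertices you can only reach at most seven edges on four vertices. So the route you sketch for $\dw(K_3)\geq 2$ cannot be repaired along those lines. The paper avoids the bipartite side entirely here: the $3\times 3$ all-ones matrix lies in $Q(K_3)$, has nullity $2$, and satisfies the ASAP, which settles $\dw(K_3)=2$ in one line.

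Second, and more seriously, $N_4$ and $M_5$ do \emph{not} contain $K_3$ as a directed minor, so the reduction via Theorem~11 fails. Check your own proposed contraction: $N_4$ has arcs $\{ab,ba,bc,cb,cd,dc,ac,db\}$, and contracting the bi-directed edge on $\{a,b\}$ produces arcs $xc$, $cx$, $cd$, $dc$, $dx$ on $\{x,c,d\}$ with $x=\{a,b\}$ --- there is no arc from $x$ to $d$ because $N_4$ contains neither $ad$ nor $bd$, so the result is not $K_3$ and contains no $K_3$. The same obstruction occurs for the other bi-directed contractions, and indeed Theorem~\ref{thm:minorchar} lists $K_3$, $N_4$, $M_5$ as a minimal obstruction set, so none of them can contain another as a directed minor. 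This is precisely why the paper's proof must pass to the bipartite model: the bipartite graphs of $N_4$ and $M_5$ each have $K_{2,2}^=$ as a matching minor (for $N_4$, bicontract the two degree-$2$ vertices $a^+$ and $d^+$; for $M_5$, bicontract the degree-$2$ vertices, delete one edge, and bicontract once more), and $Mb_S(K_{2,2}^=)=2$ together with matching-minor monotonicity of $Mb_S$ gives $\dw(N_4),\dw(M_5)\geq 2$. The multigraph $K_{2,2}^=$ is not the bipartite model of any digraph in the directed-minor order here, so this lower bound is genuinely invisible to directed-minor arguments alone.
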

\begin{proof}
    For $K_3$, take the $3\times 3$ all-ones matrix. Then $A$ has the ASAP\@. Hence $\dw(K_3)>1$. Since $\dw(K_3)\leq 2$, we obtain $\dw(K_3)=2$.

    To see that $\dw(N_4) \geq 2$, let $G$ be the corresponding bipartite graph of $N_4$. Bicontracting the vertices of degree $2$, gives the graph $K_{2,2}^=$. Since $Mb_S(K_{2,2}^=) = 2$ and $K_{2,2}^=$ is a matching minor of $G$, $Mb_S(G)\geq 2$. Hence $\dw(N_4)\geq 2$.

    To see that $\dw(M_5)\geq 2$, let $G$ be the corresponding bipartite graph of $M_5$. Bicontracting the vertices of degree $2$, deleting the single edge incident with the vertex of degree $5$, and then bicontracting the vertex of degree $2$, gives the graph
    $K_{2,2}^=$. Since $Mb_S(K_{2,2}^=) = 2$ and $K_{2,2}^=$ is a matching minor of $G$, $Mb_S(G)\geq 2$. Hence $\dw(M_5)\geq 2$.

    Since $\dw(\overleftarrow{D}) = \dw(D)$ for any digraph $D$, we obtain $\dw(\overleftarrow{N_4}) \geq 2$ and $\dw(\overleftarrow{M_5})\geq 2$
\end{proof}

The following lemma was shown in \cite{KINTALI201740}.

\begin{lem}
    Let $D$ be a digraph such that each vertex has outdegree $\geq 2$. Then $D$ has a directed minor isomorphic to $K_3$, $N_4$, or $M_5$.
\end{lem}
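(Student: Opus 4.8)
The plan is to argue by contradiction, via a minimal counterexample, reducing to a highly structured digraph and then extracting one of the three required directed minors by a careful combination of arc deletions and contractions.

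Suppose the lemma fails and let $D$ be a counterexample with $|V(D)|$ minimum and, among those, with $|A(D)|$ minimum; so every vertex of $D$ has out-degree $\ge 2$, $D$ has no directed minor isomorphic to $K_3$, $N_4$, or $M_5$, and no digraph with fewer vertices, nor one with the same number of vertices and fewer arcs, has both properties. Three reductions narrow $D$ down. First, every vertex of $D$ has out-degree exactly $2$: otherwise, deleting one out-arc of a vertex of out-degree $\ge 3$ leaves all out-degrees $\ge 2$ while decreasing $|A(D)|$, so by minimality the resulting digraph — a subdigraph, hence a directed minor, of $D$ — contains one of $K_3, N_4, M_5$ as a directed minor, and therefore so does $D$. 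Hence $|A(D)| = 2|V(D)|$. Second, $D$ is strongly connected: a vertex in a terminal strong component $C$ has all of its out-arcs inside $C$, so $C$ has all out-degrees $\ge 2$, and minimality forces $C = D$. Third, $|V(D)| \ge 4$: if $|V(D)| = 3$, each vertex, having out-degree $2$, sends an arc to each of the other two, so $D = K_3$.

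Next I would look for a strictly smaller digraph, a directed minor of $D$, in which every out-degree is still $\ge 2$; by minimality this at once yields the required minor. Deleting a vertex never works (every vertex of $D$ has an in-neighbour of out-degree exactly $2$, whose out-degree would then drop), so one must contract — either butterfly-contract an arc $vw$ (legal here only when $w$ has in-degree $1$, since no vertex has out-degree $1$), or contract a bi-directed edge $uv$. A short check of how out-degrees change gives: a butterfly contraction of such a $vw$ keeps all out-degrees $\ge 2$ unless there is also an arc from $w$ to $v$ and $v,w$ have a common out-neighbour; and a contraction of a bi-directed edge $uv$ keeps all out-degrees $\ge 2$ unless $u,v$ have a common out-neighbour or some vertex has out-neighbourhood exactly $\{u,v\}$. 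So in $D$ every contraction is ``blocked'' in one of these few ways, which pins down the local structure around bi-directed edges and around in-degree-$1$ vertices very tightly. The remaining task is to show these blocked patterns cannot coexist throughout $D$: perform a blocked contraction anyway, producing a single out-degree-$1$ vertex; butterfly-contract along its unique out-arc; iterate. Since $D$ is finite and strongly connected, this cascade must terminate either in a genuinely smaller digraph with all out-degrees $\ge 2$, or in the realisation that a short list of arcs can be deleted so that a few further contractions collapse $D$ onto a bi-directed triangle ($K_3$), or onto a path of three or four bi-directed edges bearing two chords ($N_4$ or $M_5$).

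The main obstacle is exactly this final structural case analysis. The delicate subcase is when $D$ is $2$-in-$2$-out-regular with no bi-directed edge at all: then no contraction is available until arcs are deleted — which temporarily destroys the out-degree hypothesis — so one must instead choose three prospective branch sets and delete arcs so as to make suitable arcs butterfly-contractible while preserving at least one arc between the branch sets in each of the six directions, so that contracting them produces bi-directed edges everywhere and hence a $K_3$; a naive choice of deletions only yields $K_3$ with one arc missing. Pinning down the right deletions, and bounding the length of the contraction cascades in the general case, is where the real work lies; everything before and after is routine, and once the crux is settled the contradiction with minimality is immediate.
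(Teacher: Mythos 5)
There is a genuine gap: what you have written is a plan, not a proof. The reductions you do carry out are fine --- passing to a vertex-minimal, then arc-minimal counterexample, concluding that every out-degree is exactly $2$, that $D$ is strongly connected (via a terminal strong component), and that $|V(D)|\geq 4$ since the three-vertex case is literally $K_3$ --- and your local analysis of when a butterfly contraction or a bi-directed-edge contraction preserves the all-out-degrees-$\geq 2$ condition is also correct (note that for the butterfly case the head has in-degree $1$, so no third vertex can lose out-degree). But the entire content of the lemma lies in the step you defer: showing that when every available contraction is ``blocked'', the blocking patterns force a $K_3$, $N_4$, or $M_5$ minor. You acknowledge this yourself (``the remaining task \ldots'', ``where the real work lies''), and you give no argument that your contraction cascade terminates in one of the two outcomes you list, nor any construction of the branch sets and arc deletions that would produce $N_4$ or $M_5$ in the hard cases --- in particular the $2$-in-$2$-out-regular case with no bi-directed edge, which you explicitly flag as unresolved. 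As it stands, the argument establishes only that a minimal counterexample is $2$-out-regular, strongly connected, and has at least four vertices; it does not derive a contradiction.

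For context, the paper does not prove this lemma itself; it is quoted from Kintali and Zhang \cite{KINTALI201740}, where the proof consists of exactly the kind of detailed structural case analysis you have postponed. So the missing step is not a routine verification one could wave at: it is the theorem.
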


The contrapositive of the previous lemma is:

\begin{lem}
    Let $D$ be a digraph that has no minor isomorphic to $K_3$, $N_4$, and $M_5$. Then there exists a vertex with outdegree $\leq 1$. 
\end{lem}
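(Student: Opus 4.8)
The plan is to observe that this lemma is simply the contrapositive of the immediately preceding lemma, so essentially no argument is needed beyond unwinding the logical negation. The preceding lemma states that if every vertex of $D$ has outdegree $\geq 2$, then $D$ has a directed minor isomorphic to one of $K_3$, $N_4$, or $M_5$. Negating both the hypothesis and the conclusion: if $D$ has no directed minor isomorphic to any of $K_3$, $N_4$, $M_5$, then it cannot be that every vertex has outdegree $\geq 2$, i.e., some vertex has outdegree $\leq 1$.

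Made explicit, I would argue by contradiction. Suppose $D$ has no directed minor isomorphic to $K_3$, $N_4$, or $M_5$, but that nevertheless every vertex of $D$ has outdegree at least $2$. Applying the preceding lemma to $D$ produces a directed minor of $D$ isomorphic to one of $K_3$, $N_4$, $M_5$, contradicting the hypothesis. Hence at least one vertex of $D$ has outdegree $\leq 1$, as claimed.

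The only point requiring any attention is terminological: the word ``minor'' in the statement is to be read as ``directed minor'' in the sense fixed earlier in the paper (a subdigraph followed by butterfly contractions and contractions of bi-directed edges), which is precisely the notion appearing in the preceding lemma; with that reading the two statements are logically equivalent. There is no genuine obstacle here, since all the mathematical content sits in the preceding lemma, which is quoted from Kintali--Zhang \cite{KINTALI201740}.
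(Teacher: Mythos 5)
Your proposal is correct and matches the paper exactly: the paper explicitly introduces this lemma with the phrase ``The contrapositive of the previous lemma is,'' offering no further argument. Your explicit unwinding of the contraposition, including the terminological note that ``minor'' means ``directed minor,'' is precisely the intended justification.
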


\begin{lem}
    Let $D$ be a digraph with no minor isomorphic to $K_3$, $N_4$, $M_5$, $\overleftarrow{N_4}$, or $\overleftarrow{M_5}$. Then $D$ has a vertex $v$ with outdegree $\leq 1$ and a vertex $w$ with indegree $\leq 1$.
\end{lem}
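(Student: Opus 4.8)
The plan is to reduce to the previous lemma by applying it twice: once to $D$ and once to $\overleftarrow{D}$. First I would record the (routine but needed) observation that reversing every arc commutes with each of the three directed-minor operations: the reverse of a subdigraph of $D$ is a subdigraph of $\overleftarrow{D}$; an arc $uv$ is butterfly contractible in $D$ (it is the only outgoing arc of $u$ or the only incoming arc of $v$) if and only if its reverse $vu$ is butterfly contractible in $\overleftarrow{D}$, and the two contractions produce reversed digraphs; and a bi-directed edge stays a bi-directed edge under reversal, with contraction again commuting with reversal. Hence $H$ is a directed minor of $D$ if and only if $\overleftarrow{H}$ is a directed minor of $\overleftarrow{D}$.

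Next I would use that $\overleftarrow{K_3}\cong K_3$, and that $\overleftarrow{N_4}$ and $\overleftarrow{M_5}$ are exactly the remaining two digraphs in the forbidden list. Therefore the hypothesis ``$D$ has no minor isomorphic to $K_3$, $N_4$, $M_5$, $\overleftarrow{N_4}$, $\overleftarrow{M_5}$'' is symmetric under reversal: it is equivalent to ``$\overleftarrow{D}$ has no minor isomorphic to $K_3$, $N_4$, $M_5$, $\overleftarrow{N_4}$, $\overleftarrow{M_5}$.'' In particular both $D$ and $\overleftarrow{D}$ have no minor isomorphic to $K_3$, $N_4$, or $M_5$.

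Now applying the contrapositive lemma stated just above to $D$ gives a vertex $v$ with outdegree $\leq 1$ in $D$. Applying the same lemma to $\overleftarrow{D}$ gives a vertex $w$ with outdegree $\leq 1$ in $\overleftarrow{D}$; since the outdegree of $w$ in $\overleftarrow{D}$ equals the indegree of $w$ in $D$, this $w$ has indegree $\leq 1$ in $D$. This produces the two required vertices and completes the proof.

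The proof is short, and the only point that deserves explicit care — the ``main obstacle'', such as it is — is the first paragraph: one must genuinely check that butterfly contractibility is preserved under arc reversal and that $\overleftarrow{N_4}$, $\overleftarrow{M_5}$ are the specific digraphs named in the forbidden set (rather than, say, digraphs isomorphic to $N_4$ or $M_5$). Once that symmetry is in hand, everything else is immediate from the preceding lemma.
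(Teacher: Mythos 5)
Your proof is correct and follows exactly the argument the paper intends (the paper states this lemma without proof, immediately after the contrapositive lemma it rests on): the forbidden list is closed under arc reversal, directed minors commute with reversal, so the preceding lemma applies to both $D$ and $\overleftarrow{D}$, yielding the outdegree-$\leq 1$ and indegree-$\leq 1$ vertices respectively. Your explicit check that butterfly contractibility and bi-directed-edge contraction are preserved under reversal is a worthwhile addition that the paper leaves implicit.
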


Observe that if $v$ is a vertex of $D$ with outdegree $1$, then the outgoing arc $a$ at $v$ is butterfly contractible, and if $w$ is a vertex of $D$ with indegree $1$, then the incoming arc $a$ at $w$ is butterfly contractible. If in the previous lemma $v=w$, then by applying a butterfly contraction at one of the arcs incident with $v$, we can obtain distinct vertices $v$ and $w$, with the outdegree of $v$ $\leq 1$ and the indegree of $w$ $\leq 1$. If $v$ is adjacent to $w$, then by a butterfly contraction on the arc $vw$, we can obtain distinct vertices $v$ and $w$ such that $v$ is not adjacent to $w$ and the outdegree of $v$ $\leq 1$ and the indegree of $w$ $\leq 1$. Hence we obtain the following lemma.

\begin{lem}
    Let $D$ be a digraph with no minor isomorphic to $K_3$, $N_4$, $M_5$, $\overleftarrow{N_4}$, or $\overleftarrow{M_5}$. Then $D$ has a vertex $v$ with outdegree $\leq 1$ and a vertex $w$ with indegree $\leq 1$ such that $v\not=w$ and $v$ is not adjacent to $w$.
\end{lem}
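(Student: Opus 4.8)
The plan is to derive the statement from the previous lemma by a short case analysis, with an induction on $|V(D)|$ in the two problematic cases. Throughout I use that the family $\{K_3,N_4,M_5,\overleftarrow{N_4},\overleftarrow{M_5}\}$ is closed under arc reversal and under taking directed minors, so in particular every digraph obtained from $D$ by a butterfly contraction (being a directed minor of $D$) again has none of them as a minor. The previous lemma supplies a vertex $v$ with outdegree $\leq 1$ and a vertex $w$ with indegree $\leq 1$; if $v\neq w$ and there is no arc from $v$ to $w$ we are done, so assume otherwise.

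Suppose first that $v=w$, so this single vertex has both outdegree $\leq 1$ and indegree $\leq 1$. If it has no outgoing arc, delete it; the resulting digraph has no forbidden minor, so by the previous lemma it contains a vertex $w'$ of indegree $\leq 1$, and since $v$ had no outgoing arc this bound persists in $D$ and there is no arc from $v$ to $w'$, so the pair $(v,w')$ works. The case where $v$ has no incoming arc is symmetric, deleting $v$ and using the existence of a vertex of outdegree $\leq 1$ in the smaller digraph. If $v$ has both an incoming and an outgoing arc, then its outgoing arc is the only one leaving $v$, hence butterfly contractible; contracting it yields a strictly smaller digraph in the same class, to which the induction hypothesis applies.

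Suppose next that $v\neq w$ but $v$ is adjacent to $w$. If there is an arc from $v$ to $w$, then (as $v$ has outdegree $\leq 1$) it is the only arc leaving $v$ and hence butterfly contractible; if the only arc between $v$ and $w$ runs from $w$ to $v$, then (as $w$ has indegree $\leq 1$) it is the only arc entering $w$ and again butterfly contractible. Contracting it gives a strictly smaller digraph in the same class, and the induction hypothesis applies once more. In both contraction cases, what remains is to transfer the non-adjacent pair found in the smaller digraph back to $D$.

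This transfer is the step I expect to be the main obstacle. A butterfly contraction may identify several arcs at a vertex into one, so in- and out-degrees in the contracted digraph can be strictly smaller than in $D$; consequently a pair witnessing the statement for the contracted digraph need not witness it for $D$, in particular when one of its two vertices is the newly merged vertex. I plan to control this by using the freedom in the previous lemma — it requests only one vertex of small outdegree and one of small indegree — which leaves enough room to choose the two witnesses outside the small set of vertices affected by the contraction, and, where this is forced, by reverting to the sink/source-deletion argument of the second paragraph or, when $v$ and $w$ are joined by a bi-directed edge, to a contraction of that bi-directed edge instead. The finitely many small digraphs needing separate verification will be checked directly.
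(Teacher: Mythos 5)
Your proposal follows the same route as the paper (the paper's ``proof'' is the informal paragraph preceding the lemma: use the previous lemma to get $v$ and $w$, and repair the cases $v=w$ and ``$v$ adjacent to $w$'' by butterfly contractions). But you have correctly put your finger on the step that is never carried out, and you do not carry it out either: a butterfly contraction produces the desired non-adjacent pair only in a \emph{contracted} digraph, while the lemma asserts that the pair exists in $D$ itself. Your plan to ``transfer the pair back to $D$'' by choosing witnesses away from the contracted vertices is not a proof, and in fact no such transfer argument can exist, because the lemma as stated is false. Take $D=K_2$: it has only two vertices, so it has no minor isomorphic to any of $K_3$, $N_4$, $M_5$, $\overleftarrow{N_4}$, $\overleftarrow{M_5}$ (each of which has at least three vertices), yet for either ordering of its two vertices there is an arc from the first to the second, so no pair $(v,w)$ with $v\not=w$ and $v$ not adjacent to $w$ exists. (The one-vertex digraph fails even more trivially.) Your closing promise to ``check the finitely many small digraphs directly'' would have turned up exactly this counterexample; in your induction, $K_2$ lands in the contraction case and collapses to a single vertex, where the statement cannot hold.

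So the verdict is: genuine gap, shared with the paper. The paper's preceding paragraph only shows that some butterfly contraction of $D$ contains such a pair, and then concludes ``hence we obtain the following lemma'' about $D$ itself, which is a non sequitur; the lemma needs either an additional hypothesis (e.g.\ $|V(D)|\geq 3$ and no bi-directed edge joining the only candidate pair, or simply excluding $K_2$-like configurations) or a reformulation in terms of a directed minor of $D$, together with a corresponding adjustment of how it is invoked in the minimal-counterexample argument of Lemma~\ref{lem:contradiction} (where $D=K_2$ with the zero matrix in $Q^0(K_2)$ is in fact a counterexample to that lemma as well). Your instinct that the transfer step is ``the main obstacle'' was exactly right; the honest conclusion is that the obstacle is not surmountable for the statement as written.
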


If
$A$ is an $n\times n$ matrix, $R, C\subseteq \{1,\ldots,n\}$, then $A[R,C]$ denotes the submatrix of $A$ lying in rows indexed by $R$ and and columns indexed by $C$,
together with the row and column index sets $R$ and $C$.  By $\overline{R}$, we denote the set $\{1,\ldots,n\}-R$. Several abbreviations
are also  used:
$A[R,R]$ can be denoted by $A[R]$, $A[\{v\}, C]$ can be denoted by
$A[v,C]$, etc. Also, $A(R, S) = A[\overline{R}, \overline{S}]$ and $A(R)=A[\,\overline R\,]$.

Let $B$ be an $n\times n$ matrix. If
$S\subseteq \{1,\ldots,n\}$ such that $B[S]$ is nonsingular,   the 
{\em Schur
complement} of $B[S]$ in $B$ is the matrix
$$B/B[S]=B(S)-B[\overline S,S]B[S]^{-1}B[S,\overline S].
$$ 

For a digraph $D=(V,A)$ with vertex-set $V=\{1,\ldots,n\}$ and arc-set $A$, we denote by $Q^0(D)$ the set of all real $n\times n$ matrices $B=[b_{u,w}]$ with $b_{u,w}=0$ if $u\not=w$ and there is no arc from $u$ to $w$. So $b_{u,u}\in \mathbb{R}$ for all vertices $u$ and $b_{u,w}\in \mathbb{R}$ for all arcs $uw$.

%%The SP w.r.t D

\begin{lem}\label{lem:contract}
    Let $D$ be a digraph, let $u$ be a vertex of outdegree $1$, and let $uv$ be the outgoing arc at $u$. If $A=[a_{i,j}]\in Q^0(D)$ has $a_{u,u}\not=0$ and $a_{u,v}\not=0$, then $A/A[u]\in Q^0(D/uv)$ and the nullity of $A/A[u]$ equals the nullity of $A$. Furthermore, if $A$ has the SP with respect to $D$, then $A/A[u]$ has the SP with respect to $D/uv$. 
\end{lem}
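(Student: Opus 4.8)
The plan is to work directly with the Schur complement. Since $u$ has outdegree $1$ with unique out-arc $uv$, the nonzero entries of row $u$ of $A$ are exactly $a_{u,u}$ and $a_{u,v}$. Hence the correction term in $A/A[u]=A(u)-A[\overline u,u]\,a_{u,u}^{-1}\,A[u,\overline u]$ is a matrix of rank at most $1$ supported entirely in column $v$, whose $(w,v)$-entry is $a_{w,u}a_{u,u}^{-1}a_{u,v}$ and is nonzero only for in-neighbours $w$ of $u$. So $A/A[u]$ agrees with $A(u)$ except that, for each in-neighbour $w$ of $u$, the $(w,v)$-entry becomes $a_{w,v}-a_{w,u}a_{u,u}^{-1}a_{u,v}$. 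On the other hand $D/uv$ is obtained from $D$ by deleting $u$ and adding an arc $wv$ for each in-neighbour $w$ of $u$ --- this is precisely the butterfly contraction of the unique out-arc at $u$. Comparing the two: if $xy$ (with $x\ne y$ and $x,y\ne u$) is not an arc of $D/uv$, then $xy\notin A(D)$ and, when $y=v$, also $xu\notin A(D)$, so the corresponding entry of $A/A[u]$ vanishes; diagonal entries are unconstrained in $Q^0$, which absorbs the loop at $v$ that a bi-directed edge at $u$ would create. Thus $A/A[u]\in Q^0(D/uv)$.

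For the nullity I would use the block-elimination identity
\[
\bmtx{1 & 0\\ -A[\overline u,u]a_{u,u}^{-1} & I}\,A\,\bmtx{1 & -a_{u,u}^{-1}A[u,\overline u]\\ 0 & I}\;=\;\bmtx{a_{u,u} & 0\\ 0 & A/A[u]}
\]
(writing $u$ as the first index). Both outer matrices are invertible and $a_{u,u}\ne 0$, so $\operatorname{rank}A=1+\operatorname{rank}(A/A[u])$; as $A$ is $n\times n$ and $A/A[u]$ is $(n-1)\times(n-1)$, their nullities are equal.

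For the Support Property, the same identity gives explicit isomorphisms between the left (resp.\ right) null spaces of $A/A[u]$ and of $A$: a left null vector $x'$ of $A/A[u]$ lifts to the left null vector $x$ of $A$ with $x_w=x'_w$ for $w\ne u$ and $x_u=-a_{u,u}^{-1}\sum_{w\to u}a_{w,u}x'_w$, and a right null vector $y'$ lifts to the right null vector $y$ with $y_w=y'_w$ for $w\ne u$ and $y_u=-a_{u,u}^{-1}a_{u,v}y'_v$. For nonzero $x',y'$ the lifts $x,y$ are nonzero, so by the SP of $A$ the sets $\supp(x)$ and $\supp(y)$ touch in $D$; I then push this down by cases on the witness. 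If the witnessing vertex (a common vertex of the two supports, or the tail or the head of a witnessing arc) avoids $u$, then the vertices involved lie in $\supp(x')$ and $\supp(y')$ and the witnessing arc, if any, survives in $D/uv$, so $\supp(x')$ and $\supp(y')$ touch. If $u$ is the common vertex or the tail of the witnessing arc, then (using $\operatorname{outdeg}(u)=1$ in the arc case, which forces the head to be $v$) we get $v\in\supp(y')$ from $y_u\ne 0$ and an in-neighbour $w$ of $u$ with $w\in\supp(x')$ from $x_u\ne 0$; then either $w=v\in\supp(x')\cap\supp(y')$, or $w\to v$ is an arc of $D/uv$. If $u$ is only the head of the witnessing arc $p\to u$, then $p\in\supp(x')$, $v\in\supp(y')$, and $p\to v$ is an arc of $D/uv$ (or $p=v\in\supp(x')\cap\supp(y')$). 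In every case $\supp(x')$ touches $\supp(y')$ in $D/uv$, so $A/A[u]$ has the SP with respect to $D/uv$.

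The linear-algebra steps and the Schur-complement bookkeeping are routine; the one place that needs care is the final case analysis for the SP --- in particular, translating ``$u$ lies in a support'' into a statement about $v$ and the in-neighbours of $u$, and handling the degenerate coincidence where an in-neighbour of $u$ equals $v$ (a bi-directed edge at $u$) so that the required touching is genuinely witnessed inside $D/uv$ rather than only in $D$. I expect that to be the main obstacle, though it is a finite and mechanical check once the lifting formulas are in place.
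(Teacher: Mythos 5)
Your proposal is correct and follows essentially the same route as the paper: the nullity claim via the Schur complement, and the Support Property via the same lifting formulas $x_u=-a_{u,u}^{-1}{x'}^{T}A[\overline u,u]$ and $y_u=-a_{u,u}^{-1}a_{u,v}y'_v$, with the same case analysis relating touching in $D$ to touching in $D/uv$ (the paper merely runs it in the contrapositive direction, assuming the supports of $x'$ and $y'$ do not touch and concluding the same for $x$ and $y$). You actually supply more detail than the paper on the two points it asserts without proof, namely that $A/A[u]\in Q^{0}(D/uv)$ and that the nullities agree.
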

\begin{proof}
The Schur complement $A/A[u]$ has the same nullity as $A$. It is easy to see that $A/A[u]\in Q^0(D)$.

Suppose that $A/A[u]$ does not have the SP with respect to $D/uv$. Then there exists a nonzero vector $x'$ in the left null space of $A/A[u]$ and a nonzero vector $y'$ in the right null space of $A/A[u]$ such that $\supp(x')\cap\supp(y')=\emptyset$ and there is no arc in $D/uv$ from a 
vertex in $\supp(x')$ to a vertex in $\supp(y')$.

Define the vector $x$ by 
\begin{equation}\label{eq:x1}
x_u = {x'}^T A[\overline{u},u]/a_{u,u}
\end{equation}
and $x_i = x'_i$ for $i\not=u$.
Then $x$ is a nonzero vector in the left null space of $A$. Define the vector $y$ by 
\begin{equation}\label{eq:y1}
y_u=-\frac{a_{u,v}}{a_{u,u}} y'_v
\end{equation} and $y_i=y'_i$ for $i\not=u$. Then $y$ is a nonzero vector in the right null space of $A$. 

If $\supp(x)\cap\supp(y)\not=\emptyset$, then $\supp(x)\cap\supp(y)=\{u\}$. Hence $y_u\not=0$, and therefore $y'_v\not=0$. Since $x_u\not=0$, an arc exists in $D$ from vertex $w\not=u$ to $u$ and $x'_w\not=0$. Then in $D/uv$ an arc exists from $w$ to $v$. Hence $\supp(x')$ and $\supp(y')$ touch in $D/uv$. This contradiction shows that $\supp(x)\cap\supp(y)=\emptyset$. If a vertex of $\supp(x)$ is adjacent to a vertex in $\supp(y)$, then either a vertex of $\supp(x')$ is adjacent to $u$ or $u$ is adjacent to a vertex in $\supp(y')$. In both case, we obtain that a vertex in $\supp(x')$ is adjacent to a vertex in $\supp(y')$. This contradiction shows that $A$ does not have the SP.
\end{proof}

\begin{lem}\label{lem:delete}
Let $D$ be a digraph, let $u$ be a vertex of outdegree $1$, and let $uv$ be the outgoing arc at $u$. If $A=[a_{i,j}]\in Q^0(D)$ has $a_{u,u}\not=0$ and $a_{u,v}=0$, then $A(u,u)\in Q^0(D-u)$ and the nullity of $A(u,u)$ equals the nullity of $A$. Furthermore, if $A$ has the SP with respect to $D$, then $A(u,u)$ has the SP with respect to $D-u$. 
\end{lem}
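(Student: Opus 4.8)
The plan is first to exploit that $u$ has out-degree $1$ with $uv$ its only outgoing arc: together with $a_{u,v}=0$ this forces every off-diagonal entry of row $u$ of $A$ to vanish (for $w\notin\{u,v\}$ there is no arc $u\to w$, so $a_{u,w}=0$, and $a_{u,v}=0$ by hypothesis), i.e.\ row $u$ of $A$ equals $a_{u,u}e_u^T$. The containment $A(u,u)\in Q^0(D-u)$ is then immediate, since passing to the principal submatrix on $V(D)\setminus\{u\}$ keeps every zero constraint and $D-u$ is the induced subdigraph.

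For the nullity, place $u$ last so that $A=\bmtx{A(u,u) & c\\ 0 & a_{u,u}}$ with $c=A[\overline u,u]$; because $a_{u,u}\ne 0$ one has $\operatorname{rank}A=\operatorname{rank}A(u,u)+1$, so $A$ and $A(u,u)$ have equal nullity. Two facts fall out that I will reuse for the support property: every right null vector $y$ of $A$ satisfies $y_u=0$ (its $u$-th equation is $a_{u,u}y_u=0$), so $y\mapsto y|_{\overline u}$ sends the right null space of $A$ into that of $A(u,u)$ and, conversely, $(y',0)$ is a right null vector of $A$ for each right null vector $y'$ of $A(u,u)$, with support $\supp(y')$; and, since the off-diagonal entries of row $u$ all vanish, each left null vector $x'$ of $A(u,u)$ extends to a left null vector $x$ of $A$ by $x_u=-a_{u,u}^{-1}c^Tx'$, with $\supp(x)\subseteq\supp(x')\cup\{u\}$.

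For the support property I would argue by contraposition. Suppose $A(u,u)$ fails the SP with respect to $D-u$, witnessed by a nonzero $x'$ in the left null space and a nonzero $y'$ in the right null space of $A(u,u)$ with $a_{p,q}=0$ for all $p\in\supp(x')$ and $q\in\supp(y')$ (in particular the supports are disjoint). Let $y=(y',0)$ and let $x$ be the extension of $x'$ described above; these are nonzero left and right null vectors of $A$ with $\supp(y)=\supp(y')$ and $\supp(x)\subseteq\supp(x')\cup\{u\}$. Take any $p\in\supp(x)$ and $q\in\supp(y)$. Since $q\in\supp(y')\subseteq V(D)\setminus\{u\}$, either $p\in\supp(x')$, so $a_{p,q}=0$ by the witness, or $p=u$, so $q\ne u$ and $a_{u,q}=0$ because the only nonzero entry of row $u$ is $a_{u,u}$. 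Hence $a_{p,q}=0$ for all $p\in\supp(x)$, $q\in\supp(y)$, so $A$ fails the SP with respect to $D$.

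The only step carrying any weight is this last check, and it is exactly here that the hypothesis $a_{u,v}=0$ is used: once $y$ is forced to vanish at $u$, the single extra coordinate $u$ that may sit in $\supp(x)$ can create a touch with $\supp(y)$ only via an outgoing arc of $u$ meeting $\supp(y)$, i.e.\ only via $uv$, and $a_{u,v}=0$ kills it. This is precisely what separates the present situation from Lemma~\ref{lem:contract}, where $a_{u,v}\ne 0$ and one must contract $uv$ rather than delete $u$. I anticipate no real obstacle beyond keeping the two extensions well defined and reading ``touch'' at the level of the nonzero entries of $A\in Q^0(D)$ (equivalently, with respect to the support digraph of $A$), which is the notion under which the statement holds.
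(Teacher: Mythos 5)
Your argument follows the same route as the paper's: contraposition for the support property, the same extension $x_u=-{x'}^TA[\overline{u},u]/a_{u,u}$ of a left null vector of $A(u,u)$ and the same extension $y=(y',0)$ of a right null vector, together with the observation that row $u$ of $A$ reduces to $a_{u,u}e_u^T$ (the paper packages the nullity step as $A(u,u)=A/A[u]$, which is your block computation). The only place you deviate is the final touching check, and your caution there is warranted: the paper simply asserts that no vertex of $\supp(x)$ is adjacent to a vertex of $\supp(y)$, but under the paper's arc-based definition of touching this assertion can fail, since $u$ may lie in $\supp(x)$ while $v\in\supp(y')=\supp(y)$, in which case the arc $uv$ joins the two supports even though $a_{u,v}=0$. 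Concretely, for the directed triangle with arcs $u\to v\to w\to u$ and $A=\bmtx{1&0&0\\0&0&1\\1&0&0}$ (rows and columns ordered $u,v,w$), the left and right null supports of $A$ are $\{u,w\}$ and $\{v\}$, which touch via the arc $uv$, so $A$ has the SP as defined in the paper; yet for $A(u,u)$ the supports $\{w\}$ and $\{v\}$ do not touch in $D-u$. So the lemma, and with it your proof, is correct only under the entry-level reading of ``touch'' (some $p\in\supp(x)$, $q\in\supp(y)$ with $a_{p,q}\neq 0$) that you adopt in your closing paragraph; this is a defect in the paper's definition of the SP rather than in your argument, since the proof that the ASAP implies the SP delivers the entry-level version verbatim, and the companion reductions (Lemmas~\ref{lem:contract}, \ref{lem:semicontract}, \ref{lem:noSP}) should be read under the same convention.
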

\begin{proof}
Since $A(u,u)=A/A[u]$, we see that the nullity of $A(u,u)$ equals the nullity of $A$. It is clear that $A(u,u)\in Q^0(D-u)$.

Suppose that $A(u,u)$ does not have the SP with respect to $D-u$. Then there exists a nonzero vector $x'$ in the left null space of $A(u,u)$ and a nonzero vector $y'$ in the right null space of $A(u,u)$ such that $\supp(x')\cap\supp(y')=\emptyset$ and there is no arc from a vertex in $\supp(x')$ to a vertex in $\supp(y')$.
Define the vector $x$ by $x_u = {x'}^TA[\overline{u},u]/a_{u,u}$ and $x_i = x'_i$ for $i\not=u$. Then $x$ is a nonzero vector in the left null space of $A$. Define the vector $y$ by $y_u=0$ and $y_i=y'_i$ for $i\not=u$. Then $y$ is a nonzero vector in the right null space of $A$. Then $\supp(x)\cap\supp(y)=\emptyset$ and no vertex in $\supp(x)$ is adjacent to a vertex in $\supp(y)$. This shows that $A$ does not have the SP.
\end{proof}

Let $D$ be a digraph, let $u$ be vertex with outdegree $1$, and let $uv$ be the outgoing arc at $u$. By $D*uv$, we denote the digraph obtained from $D$ by first deleting the arcs $wv$ for all vertices $w$ that are in-neighbors of $v$, and then contracting the arc $uv$. Observe that $D*uv$ is a minor of $D/uv$.

\begin{lem}\label{lem:semicontract}
    Let $D$ be a digraph, let $u$ be a vertex of outdegree $1$, and let $uv$ be the outgoing arc at $u$. If $A=[a_{i,j}]\in Q^0(D)$ has $a_{u,u}=0$ and $a_{u,v}\not=0$, then $A(u,v)\in Q^0(D*uv)$ and the nullity of $A(u,v)$ equals the nullity of $A$. Furthermore, if $A$ has the SP with respect to $D$, then $A(u,v)$ has the SP with respect to $D*uv$. 
\end{lem}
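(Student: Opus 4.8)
The proof should parallel those of Lemmas~\ref{lem:contract} and~\ref{lem:delete}: first dispatch the claims on $Q^0$-membership and nullity by a direct computation, then prove the SP statement by contraposition. Since $u$ has outdegree $1$ with unique out-arc $uv$ and $a_{u,u}=0$, row $u$ of $A$ has a single nonzero entry, $a_{u,v}$; hence for $y$ in the right null space of $A$ the row-$u$ equation forces $y_v=0$, and the remaining equations are exactly $A(u,v)\,y|_{V\setminus\{v\}}=0$, while conversely any right null vector of $A(u,v)$ extends by $y_v=0$ to one of $A$. So the nullities coincide. To see $A(u,v)\in Q^0(D*uv)$, let $z$ be the vertex of $D*uv$ coming from the contraction of $uv$ and identify the row index $v$ and the column index $u$ of $A(u,v)$ with $z$, leaving all other indices fixed; the zero pattern matches because entries between uncontracted vertices are unchanged (and $D*uv$ keeps the same arcs there), an entry $a_{w,u}$ into $z$ can be nonzero only when $wu\in A(D)$, which is exactly when $D*uv$ has the arc $w\to z$ (all other in-arcs of $v$ were deleted), an entry $a_{v,x}$ out of $z$ matches the arc $z\to x$ of $D*uv$ (as $uv$ is $u$'s only out-arc), and the diagonal is unconstrained.

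For the SP statement, suppose $A(u,v)$ fails the SP with respect to $D*uv$ and pick nonzero $x'$ in its left null space and $y'$ in its right null space with $\supp(x')\cap\supp(y')=\emptyset$ and no arc of $D*uv$ from $\supp(x')$ to $\supp(y')$. Lift them to $A$ by setting $y_w=y'_w$ for $w\neq v$, $y_v=0$ (a nonzero right null vector of $A$), and $x_w=x'_w$ for $w\neq u$, $x_u=-\tfrac{1}{a_{u,v}}\sum_{w\neq u}x'_w a_{w,v}$ (the value forced by the column-$v$ equation; a nonzero left null vector of $A$). The aim is to show $\supp(x)$ and $\supp(y)$ do not touch in $D$, contradicting the SP of $A$. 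One transports a hypothetical touching back along the contraction: a vertex of $\supp(x)\cap\supp(y)$ is either uncontracted, hence in $\supp(x')\cap\supp(y')$, or equal to $v$ (impossible since $y_v=0$), or equal to $u$; similarly an arc of $D$ from $\supp(x)$ to $\supp(y)$ either survives into $D*uv$ between $\supp(x')$ and $\supp(y')$ or is incident with $u$ or $v$, hence with $z$.

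The step I expect to be the main obstacle is precisely the case in which the touching in $D$ passes through $u$. There $x_u\neq 0$ only tells us that some $w\in\supp(x')$ has $a_{w,v}\neq 0$, i.e.\ $w$ is an in-neighbour of $v$ in $D$ --- but the arc $wv$ is exactly one of the arcs deleted in forming $D*uv$, so it does not directly witness a touching of $\supp(x')$ and $\supp(y')$ there. To close this case one should use that $y_u\neq 0$ forces $z\in\supp(y')$ and then extract the needed touching in $D*uv$ from the local structure of $A$ at $u$ and $v$ together with the SP of $A$; this is where the careful choice of $D*uv$ --- deleting the remaining in-arcs of $v$ before contracting, rather than taking all of $D/uv$ --- has to be exploited, and it is the part of the argument that will require the most care.
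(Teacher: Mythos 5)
Your handling of the $Q^0$-membership and nullity claims is correct and matches the paper: since $a_{u,u}=0$ and $uv$ is the only out-arc of $u$, row $u$ of $A$ equals $a_{u,v}e_v^T$, so $A(u,v)=A/A[u,v]$ and the null spaces correspond exactly as you describe; your lift of $x'$ and $y'$ is also the one the paper uses. The problem is the case you flag at the end, the touching of $\supp(x)$ and $\supp(y)$ at the vertex $u$. You are right that this is the crux, but you do not close it, and in fact it cannot be closed: the ``Furthermore'' part of the lemma is false as stated. (The paper's own proof silently asserts $\supp(x)\cap\supp(y)=\emptyset$ at exactly this point, so you have located a genuine error rather than merely a hard step.) Concretely, take $D$ on $\{u,v,w\}$ with arcs $uv$, $vw$, $wv$, and, ordering rows and columns as $u,v,w$,
\[
A=\begin{bmatrix}0&1&0\\0&1&1\\0&1&0\end{bmatrix}\in Q^0(D).
\]
The left null space of $A$ is spanned by $(1,0,-1)$ with support $\{u,w\}$, the right null space by $(1,0,0)^T$ with support $\{u\}$; these intersect, so $A$ has the SP\@. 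But $D*uv$ has vertex set $\{z,w\}$ (with $z$ the contracted vertex) and the single arc $zw$, while $A(u,v)=\left[\begin{smallmatrix}0&1\\0&0\end{smallmatrix}\right]$ has left null space spanned by the vector supported on $\{w\}$ and right null space spanned by the vector supported on $\{z\}$. Since $D*uv$ has no arc from $w$ to $z$, these supports do not touch, and $A(u,v)$ fails the SP\@. The mechanism is precisely the one you isolated: $x_u\neq 0$ is forced by an in-neighbour $w$ of $v$ lying in $\supp(x')$, the arc $wv$ is one of the arcs deleted in forming $D*uv$, and $y'_u\neq0$ puts $z$ in $\supp(y')$; the lifted supports then touch only at $u$, and that touching has no counterpart after the reduction.

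So no amount of care with ``the local structure of $A$ at $u$ and $v$'' will rescue the argument for $D*uv$. The natural repair is to state and prove the lemma for $D/uv$ instead of $D*uv$: since $uv$ is the only out-arc of $u$, $D/uv$ is a butterfly contraction of $D$ (so the minor-closed hypotheses in Lemma~\ref{lem:contradiction} are still preserved), one checks $A(u,v)\in Q^0(D/uv)$ just as easily, and in the problematic case the arc $wv$ of $D$ survives as an arc from $w$ to the contracted vertex $z$ of $D/uv$, so $\supp(x')$ and $\supp(y')$ do touch there and the contrapositive goes through. If you want to keep $D*uv$, you would instead need an additional hypothesis excluding the configuration above. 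As written, your proposal and the paper's proof break at the same line; your diagnosis of where is correct, but the missing step is not merely delicate --- it is unprovable.
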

\begin{proof}
Since $A(u,v) = A/A[u,v]$, we see that $A(u,v)$ has nullity equal to the nullity of $A$. It is easy to verify that $A(u,v)\in Q^0(D*uv)$.

Suppose that $A(u,v)$ does not have the SP with respect to $D*uv$. Then there exists a nonzero vector $x'$ in the left null space of $A(u,v)$ and a nonzero vector $y'$ in the right null space of $A(u,v)$ such that $\supp(x')\cap \supp(y')=\emptyset$ and no vertex in $\supp(x')$ is adjacent to a vertex in $\supp(y')$.
Define the vector $x$ by $x_u = {x'}^TA[\overline{u},v]/a_{u,v}$ and $x_i = x'_i$ for $i\not=u$, and the vector $y$ by $y_v=0$ and $y_i=y'_i$ for $i\not=v$. Then $x$ is a nonzero vector in the left null space of $A$ and $y$ is a nonzero vector in the right null space of $A$. Then $\supp(x)\cap \supp(y)=\emptyset$ and no vertex in $\supp(x)$ is adjacent to a vertex in $\supp(y)$. This shows that $A$ does not have the SP.
\end{proof}

\begin{lem}\label{lem:noSP}
    Let $D$ be a digraph with $n$ vertices, let $u$ be a vertex of outdegree $1$, let $uw$ be the outgoing arc at $u$, let $v$ be a vertex of indegree $1$, and let $zv$ be the incoming arc. Let $A=[a_{i,j}]\in Q^0(D)$ has $a_{u,u}=a_{u,w}=a_{v,v}=a_{z,v}=0$. If $u\not=v$ and $u$ is not adjacent to $v$, then $A$ does not have the SP. 
\end{lem}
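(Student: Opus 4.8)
The plan is to exhibit directly a nonzero left null vector and a nonzero right null vector of $A$ whose supports fail to touch; the point is that the hypotheses force an entire row and an entire column of $A$ to vanish.

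First I would examine row $u$. Since $u$ has outdegree $1$ with unique outgoing arc $uw$, the only entries of row $u$ that are not forced to be $0$ by the definition of $Q^0(D)$ are the diagonal entry $a_{u,u}$ and the entry $a_{u,w}$; both are $0$ by hypothesis, so the whole row $u$ of $A$ is zero. Equivalently, the standard basis vector $e_u$ satisfies $e_u^T A = 0$, so $e_u$ is a nonzero vector in the left null space of $A$ with $\supp(e_u)=\{u\}$. Symmetrically, examining column $v$: since $v$ has indegree $1$ with unique incoming arc $zv$, the only entries of column $v$ not forced to be $0$ are $a_{v,v}$ and $a_{z,v}$, both of which are $0$ by hypothesis; hence column $v$ of $A$ is zero, so $A e_v = 0$ and $e_v$ is a nonzero vector in the right null space of $A$ with $\supp(e_v)=\{v\}$.

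Finally I would check that $\{u\}$ does not touch $\{v\}$: the two sets are disjoint because $u\neq v$, and there is no arc from $u$ to $v$ because $u$ is not adjacent to $v$. Hence the support of the left null vector $e_u$ does not touch the support of the right null vector $e_v$, so $A$ does not have the SP. There is essentially no obstacle here; the only points needing care are verifying that the two hypothesized zero entries together with the degree-one conditions really do account for all of row $u$ and all of column $v$, and reading ``$u$ is not adjacent to $v$'' as excluding an arc in either direction, in particular the arc from $u$ to $v$.
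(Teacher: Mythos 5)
Your proposal is correct and follows essentially the same argument as the paper: both exhibit $e_u$ as a left null vector and $e_v$ as a right null vector (since the degree-one hypotheses together with the vanishing of $a_{u,u},a_{u,w},a_{v,v},a_{z,v}$ force row $u$ and column $v$ of $A$ to be zero) and then observe that $\{u\}$ and $\{v\}$ do not touch. Your version merely spells out more explicitly why the row and column vanish.
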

\begin{proof}
Let $x=[x_i]\in \mathbb{R}^n$ with $x_u=1$ and $x_i=0$ for $i\not=u$ and let $y=[y_i]\in \mathbb{R}^n$ with $y_v=1$ and $y_i=0$ for $i\not=v$. Then $x$ belongs to the left null space of $A$ and $y$ belongs to the right null space of $A$. 
Since $u\not=v$, $\supp(x)\cap \supp(y)=\emptyset$. Since $u$ is not adjacent to $v$, $\supp(x)$ and $\supp(y)$ do not touch. Hence $A$ does not have the SP.
\end{proof}

\begin{lem}\label{lem:contradiction}
    Let $D$ be a digraph with no minor isomorphic to $K_3$, $N_4$, $M_5$, $\overleftarrow{N_4}$, or $\overleftarrow{M_5}$. If $A=[a_{i,j}]\in Q^0(D)$, then either $A$ has nullity $\leq 1$ or $A$ does not have the SP.  
\end{lem}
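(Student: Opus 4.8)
The plan is to argue by induction on $n=|V(D)|$, removing one vertex at a time through the Schur-complement reductions of Lemmas~\ref{lem:contract}, \ref{lem:delete} and~\ref{lem:semicontract}, and using Lemma~\ref{lem:noSP} together with the structural lemma just above to settle the one configuration in which none of those reductions is available. If the nullity of $A$ is at most $1$ there is nothing to prove, so suppose it is at least $2$ and, for contradiction, that $A$ has the SP; then $n\ge 2$ (for $n\le 1$ the nullity cannot exceed $1$, which takes care of the base of the induction). Two remarks keep the induction inside the hypothesis. First, the class of digraphs with no minor isomorphic to $K_3,N_4,M_5,\overleftarrow{N_4},\overleftarrow{M_5}$ is closed under taking directed minors, so the induction hypothesis is available for every digraph produced by a reduction. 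Second, this class is closed under reversing all arcs (indeed $\overleftarrow{K_3}=K_3$, and the remaining four are interchanged in pairs), and $A$ has the SP with respect to $D$ exactly when $A^{T}$ has the SP with respect to $\overleftarrow{D}$; so every statement below about an out-degree-$\le 1$ vertex has a column-version, obtained by passing to $A^{T}$ and $\overleftarrow{D}$, about an in-degree-$\le 1$ vertex.

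For the inductive step, fix by the structural lemma a vertex $u$ of out-degree $\le 1$ and a vertex $w$ of in-degree $\le 1$ with $u\ne w$ and $u$ not adjacent to $w$, and examine $u$ first. If $a_{u,u}\ne 0$, then Lemma~\ref{lem:contract} (when $u$ has an out-arc $uv$ with $a_{u,v}\ne 0$) or Lemma~\ref{lem:delete} (when $u$ has an out-arc with $a_{u,v}=0$, or, by the same Schur complement, when $u$ has no out-arc) gives a directed minor of $D$ on $n-1$ vertices together with a matrix in its $Q^{0}$-class of the same nullity that still has the SP; the induction hypothesis is then contradicted. If $a_{u,u}=0$ but $u$ has an out-arc $uv$ with $a_{u,v}\ne 0$, Lemma~\ref{lem:semicontract} does the same with $D*uv$. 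The only remaining possibility is that the whole $u$-row of $A$ vanishes, so $e_{u}$ lies in the left null space of $A$; then the SP forces the support of every nonzero right null vector to meet $\{u\}$ if $u$ has out-degree $0$, and $\{u,v\}$ if $u$ has out-degree $1$ with out-neighbour $v$. In the first case the right null space avoids the hyperplane $\{y_{u}=0\}$, impossible since its dimension is at least $2$; in the second it avoids the codimension-two subspace $\{y_{u}=y_{v}=0\}$, which forces the nullity to be at most $2$, and we turn to $w$.

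So suppose the $u$-row of $A$ vanishes and $u$ has out-degree $1$ with out-neighbour $v$. Applying the column-version of the previous paragraph to $w$: either $a_{w,w}\ne 0$, or $a_{w,w}=0$ with a nonzero entry on the in-arc $zw$, in which cases a reduction lowers $n$ and the induction hypothesis (in $\overleftarrow{D}$) is contradicted; or the whole $w$-column of $A$ vanishes. In this last case, if $w$ has in-degree $0$ a hyperplane count again contradicts that the nullity is at least $2$; and if $w$ has in-degree $1$ with in-neighbour $z$, then the $u$-row and the $w$-column of $A$ both vanish, $u\ne w$, and $u$ is not adjacent to $w$, while the non-adjacency of $u$ and $w$ also yields $v\ne w$ and $z\ne u$, so that the hypotheses of Lemma~\ref{lem:noSP} are met. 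Lemma~\ref{lem:noSP} then says $A$ does not have the SP, the final contradiction; this exhausts the cases.

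The step I expect to be the main obstacle is exactly this last configuration---an out-degree-$\le 1$ vertex with a vanishing row alongside an in-degree-$\le 1$ vertex with a vanishing column---because there none of the Schur-complement reductions is available; it is what Lemma~\ref{lem:noSP} was set up for, and it is why the structural lemma is needed in the sharpened form supplying $u$ and $w$ both distinct and non-adjacent. The remaining work---verifying that each reduction stays in this class and preserves both the nullity and the SP, and inspecting the few degenerate small digraphs---is routine, being supplied by the lemmas cited above.
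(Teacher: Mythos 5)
Your proof is correct and follows essentially the same route as the paper's: a minimal-counterexample/induction argument that applies Lemmas~\ref{lem:contract}, \ref{lem:delete}, and~\ref{lem:semicontract} (and their column versions via $A^{T}$ and $\overleftarrow{D}$) whenever one of the four relevant entries is nonzero, and invokes Lemma~\ref{lem:noSP} in the remaining all-zero configuration. You are in fact somewhat more careful than the paper, which tacitly assumes the out- and in-degrees are exactly $1$ and omits the degree-$0$ subcases that you dispatch with the hyperplane count.
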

\begin{proof}
    Suppose the statement the lemma is false. Let $D$ be a counterexample with a minimum number of vertices. Then there exists a matrix $A=[a_{i,j}]\in Q^0(D)$ such that $A$ has nullity $>1$ and $A$ has the SP. Since $D$ has no minor isomorphic to $K_3$, $N_4$, $M_5$, $\overleftarrow{N_4}$, or $\overleftarrow{M_5}$, we have that $D$ has a vertex $u$ with outdegree $1$ and a vertex $v$ with indegree $1$ such that $u\not=v$ and $u$ is not adjacent to $v$.
    Let $uw$ be the outgoing arc at $u$ and let $zv$ be the incoming arc at $v$.  If $a_{u,u}=a_{u,w}=a_{v,v}=a_{z,v}=0$, then, by Lemma~\ref{lem:noSP}, $A$ does not have the SP. 
    We may therefore assume that $a_{u,u}\not=0$, $a_{u,w}\not=0$, $a_{v,v}\not=0$, or $a_{z,v}\not=0$. Then Lemmas~\ref{lem:contract},~\ref{lem:delete}, and ~\ref{lem:semicontract} show that there exists a digraph $D'$ with fewer vertices and a matrix $A'\in Q^0(D')$ that has nullity $>1$ and has the SP; a contradiction. Hence the statement of the lemma is true.
\end{proof}

\begin{lem}
    A digraph $D$ has $\dw(D)\leq 1$ if and only if $D$ has no minor isomorphic to $K_3$, $N_4$, $M_5$, $\overleftarrow{N_4}$, or $\overleftarrow{M_5}$.
\end{lem}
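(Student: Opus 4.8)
The plan is to read off the characterization from the results already assembled, so the argument is essentially bookkeeping. I would prove the two implications separately, using contraposition for the ``only if'' direction.

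For the ``only if'' direction I would show: if $D$ has a directed minor $H$ isomorphic to one of $K_3$, $N_4$, $M_5$, $\overleftarrow{N_4}$, or $\overleftarrow{M_5}$, then $\dw(D)\ge 2$. This is immediate from the minor-monotonicity theorem for $\dw$ (the theorem asserting $\dw(H)\le\dw(D)$ whenever $H$ is a directed minor of $D$) together with Lemma~\ref{lem:kwforbidden}, which gives $\dw(H)\ge 2$ for each of these five digraphs. Hence $\dw(D)\le 1$ forces $D$ to contain none of the five as a minor.

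For the ``if'' direction, suppose $D$ has no minor isomorphic to $K_3$, $N_4$, $M_5$, $\overleftarrow{N_4}$, or $\overleftarrow{M_5}$, and let $A\in Q(D)$ be an arbitrary matrix with the ASAP. Since $Q(D)\subseteq Q^0(D)$ we have $A\in Q^0(D)$, and since the ASAP implies the SP (the earlier lemma to that effect), $A$ has the SP with respect to $D$. Now Lemma~\ref{lem:contradiction} applies: either $A$ has nullity $\le 1$ or $A$ does not have the SP. As $A$ does have the SP, its nullity is at most $1$. Because $A$ was an arbitrary ASAP matrix in $Q(D)$, and such a matrix exists, we conclude $\dw(D)\le 1$. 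Combining the two directions gives the claimed equivalence.

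I do not expect a genuine obstacle, since the inductive work is already done inside Lemma~\ref{lem:contradiction} and the numerical inputs inside Lemma~\ref{lem:kwforbidden}. The one point requiring a little care is the meaning of ``minor'' in the statement: it should be read as ``directed minor'' in the sense used elsewhere in the paper (a subdigraph followed by butterfly contractions and bi-directed-edge contractions), so that Lemma~\ref{lem:contradiction} and the monotonicity theorem are being invoked for the same class of minors; with that reading the proof is a short deduction.
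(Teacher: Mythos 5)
Your proposal is correct and follows essentially the same route as the paper: the forward direction via minor-monotonicity of $\dw$ combined with Lemma~\ref{lem:kwforbidden}, and the converse via $Q(D)\subseteq Q^0(D)$, the fact that ASAP implies SP, and Lemma~\ref{lem:contradiction}. Your version is in fact slightly more explicit than the paper's, which leaves the ``ASAP implies SP'' step implicit when invoking Lemma~\ref{lem:contradiction}.
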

\begin{proof}
    Suppose $D$ is a digraph with $\dw(D)\leq 1$. Then, by Lemma, $D$ has no minor isomorphic to $K_3$, $N_4$, $M_5$, $\overleftarrow{N_4}$, or $\overleftarrow{M_5}$.

    Conversely, suppose $D$ has no minor isomorphic to $K_3$, $N_4$, $M_5$, $\overleftarrow{N_4}$, or $\overleftarrow{M_5}$. 
    Suppose for a contradiction that $\dw(D)>1$. Then there exists a matrix $A\in Q(D)$ with nullity $>1$ and having the ASAP\@. Since $Q(D)\subseteq Q^0(D)$, we obtain a contradiction with Lemma~\ref{lem:contradiction}.
\end{proof}

\begin{thm}\label{thm:minorchar}\cite{KINTALI201740}
    A digraph $D$ is a partial $1$-DAG if and only if $D$ does not have a $K_3$, $N_4$, and $M_5$ as a directed minor.
\end{thm}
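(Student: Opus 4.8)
The plan is to deduce Theorem~\ref{thm:minorchar} from two ingredients recalled above: the equivalence (Hunter and Kreutzer) that a digraph is a partial $1$-DAG exactly when it admits a directed elimination ordering of width $\leq 1$ (equivalently, has Kelly-width $\leq 2$), and the cited lemma that every digraph in which each vertex has outdegree $\geq 2$ contains a directed minor isomorphic to $K_3$, $N_4$, or $M_5$.

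\emph{Necessity.} The class of partial $1$-DAGs is closed under taking directed minors: being a partial $1$-DAG is the same as having Kelly-width $\leq 2$, and by the lemma of Kintali and Zhang the Kelly-width does not increase when one passes to a directed minor. Hence it suffices to observe that none of $K_3$, $N_4$, $M_5$ is itself a partial $1$-DAG. In each of these three digraphs every vertex has outdegree exactly $2$; but in a directed elimination ordering $(v_1,\ldots,v_n)$ of width $\leq 1$ the first vertex $v_1$ has outdegree $\leq 1$ in $D_1^\lhd = D$ --- a contradiction. So none of them admits a directed elimination ordering of width $\leq 1$, none is a partial $1$-DAG, and therefore no partial $1$-DAG has any of them as a directed minor.

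\emph{Sufficiency.} I would argue by induction on $|V(D)|$, the base case $|V(D)|=1$ being trivial (the one-vertex digraph is $K_1$). Suppose $D$ has no directed minor isomorphic to $K_3$, $N_4$, or $M_5$. By the contrapositive of the cited lemma, $D$ has a vertex $v$ of outdegree $\leq 1$. Let $D'$ be the digraph obtained from $D$ by the directed elimination of $v$, that is, by deleting $v$ and, in case $v$ has a (unique) outgoing arc $vw$, adding the arc $uw$ for every in-neighbour $u\neq w$ of $v$. The key point is that $D'$ is a directed minor of $D$: if $\deg^+(v)=0$ then $D' = D - v$ is a subdigraph; and if $\deg^+(v)=1$ with outgoing arc $vw$, then $vw$ is the only outgoing arc at $v$, hence butterfly contractible, and $D'$ is exactly $D$ with $vw$ butterfly contracted, the merged vertex taking over the role of $w$. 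Consequently $D'$ has no directed minor isomorphic to $K_3$, $N_4$, or $M_5$ either, and $|V(D')| < |V(D)|$; by the induction hypothesis $D'$ is a partial $1$-DAG, so it has a directed elimination ordering $(v_2,\ldots,v_n)$ of width $\leq 1$. Now consider the ordering $\lhd = (v, v_2,\ldots, v_n)$ of $D$: by construction $D_2^\lhd = D'$, so $D_i^\lhd$ for $i\geq 2$ reproduces the elimination sequence of $D'$, which has width $\leq 1$; and the outdegree of $v$ in $D_1^\lhd = D$ is $\leq 1$. Hence $\lhd$ has width $\leq 1$ and $D$ is a partial $1$-DAG, completing the induction.

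The step that I expect to require the most care is the identification of ``directed elimination of a vertex of outdegree $\leq 1$'' with ``passing to a directed minor'': one must check that the arc set produced by the formal definition of $D_{i+1}^\lhd$ coincides with that of the butterfly contraction of $vw$, including the degenerate case where the out-neighbour $w$ is also an in-neighbour of $v$ (so that no loop appears and no spurious arc is created). The genuinely combinatorial content --- that a minimum outdegree of $2$ already forces one of the three obstructions --- is exactly the cited lemma, and the remainder is bookkeeping about concatenating elimination orderings.
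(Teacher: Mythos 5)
The paper itself offers no proof of this theorem: it is quoted from Kintali and Zhang \cite{KINTALI201740}, so there is no in-paper argument to compare yours against. Taken on its own, your derivation is correct and assembles exactly the ingredients the paper recalls. For necessity, the chain ``partial $1$-DAG $\Leftrightarrow$ Kelly-width $\leq 2$'' (Hunter--Kreutzer) plus minor-monotonicity of Kelly-width (Kintali--Zhang) reduces everything to checking that $K_3$, $N_4$, $M_5$ are not partial $1$-DAGs, and your observation that every vertex of each has outdegree exactly $2$ (true for all three arc lists as given in the paper) kills any elimination ordering of width $\leq 1$ at its first vertex. For sufficiency, the induction via elimination of a vertex of outdegree $\leq 1$ is the standard route, and you rightly flag the one point needing care, namely that eliminating such a vertex $v$ is the same operation as butterfly-contracting its unique outgoing arc $vw$ (or deleting $v$ when it has no outgoing arc), including the non-creation of a loop when $w$ is also an in-neighbour of $v$. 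The only step you use silently is transitivity of the directed-minor relation: to conclude that $D'$ has none of the three obstructions you need that a directed minor of the directed minor $D'$ is again a directed minor of $D$. This is true and is used implicitly elsewhere in the paper as well, but since ``directed minor'' is defined as ``contractions applied to a subdigraph'' (operations in a fixed order), a one-line remark that the relation is transitive would make the induction airtight.
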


\begin{thm}
    Let $D$ be a digraph. Then equivalent are:
    \begin{enumerate}
        \item $\dw(D)\leq 1$,
        \item $D$ has no minor isomorphic to $K_3$, $N_4$, $M_5$, $\overleftarrow{N_4}$, or $\overleftarrow{M_5}$, 
        \item $D$ and $\overleftarrow{D}$ are partial $1$-DAGs, and
        \item $D$ and $\overleftarrow{D}$ both have Kelly-width $\leq 2$.  
         \end{enumerate}
\end{thm}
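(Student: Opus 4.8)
The plan is to establish the four-way equivalence by citing the earlier results in the excerpt and tying them together with the structural lemma on forbidden minors. The theorem immediately preceding this one already gives $(1)\Leftrightarrow(2)$, so the real work is connecting the combinatorial characterizations $(3)$ and $(4)$ to each other and to $(2)$.

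First I would handle $(3)\Leftrightarrow(4)$. By Theorem~\ref{thm:minorchar} (the cited result of Kintali and Zhang), $D$ is a partial $1$-DAG if and only if $D$ has no $K_3$, $N_4$, or $M_5$ as a directed minor; and by the Theorem of Hunter and Kreutzer cited earlier, $D$ is a partial $1$-DAG if and only if $D$ has Kelly-width $\leq 2$. Applying both equivalences to $D$ and to $\overleftarrow{D}$ separately gives that $(3)$ and $(4)$ are each equivalent to the statement ``neither $D$ nor $\overleftarrow{D}$ has a $K_3$, $N_4$, or $M_5$ as a directed minor.'' Here I would use that $\overleftarrow{D}$ has an $H$-minor if and only if $D$ has an $\overleftarrow{H}$-minor (reversing all arcs commutes with taking subdigraphs, butterfly contractions, and bi-directed-edge contractions), together with $\overleftarrow{K_3}=K_3$, so that ``$\overleftarrow{D}$ has no $K_3$, $N_4$, $M_5$ directed minor'' is the same as ``$D$ has no $K_3$, $\overleftarrow{N_4}$, $\overleftarrow{M_5}$ directed minor.'' Combining the two conditions on $D$, this is precisely statement $(2)$.

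Next I would observe that $(2)$ as stated refers to ``minor'' while the cited theorems use ``directed minor''; in this paper the word ``minor'' without qualification evidently means ``directed minor'' (as in Lemma~\ref{lem:contradiction} and the surrounding lemmas), so there is nothing to reconcile — I would simply make this reading explicit. With that, $(2)\Leftrightarrow(3)\Leftrightarrow(4)$ is complete, and since the preceding theorem supplies $(1)\Leftrightarrow(2)$, all four statements are equivalent.

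The step I expect to require the most care is the bookkeeping around arc-reversal and the asymmetry of the excluded minors: one must be careful that $N_4$ and $M_5$ are not self-reverse (unlike $K_3$), so the forbidden list for $D$ alone would be $\{K_3,N_4,M_5\}$ but the symmetric condition ``$D$ and $\overleftarrow{D}$ are partial $1$-DAGs'' forces in the extra pair $\overleftarrow{N_4},\overleftarrow{M_5}$, matching exactly the list in $(2)$. I would also note explicitly that every operation defining a directed minor is reversed-compatible, which is what lets me transfer the minor-exclusion hypothesis between $D$ and $\overleftarrow{D}$; this is the only genuinely new (if routine) verification, everything else being an assembly of quoted results.
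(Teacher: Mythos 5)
Your proposal is correct and follows exactly the route the paper intends: the paper gives no explicit proof for this theorem because it is precisely the assembly of the immediately preceding lemma (for $(1)\Leftrightarrow(2)$), Theorem~\ref{thm:minorchar} together with the arc-reversal compatibility of directed minors (for $(2)\Leftrightarrow(3)$), and the Hunter--Kreutzer equivalence of partial $k$-DAGs with Kelly-width $\leq k+1$ (for $(3)\Leftrightarrow(4)$). Your explicit attention to the reversal bookkeeping and to $\overleftarrow{K_3}=K_3$ is the right (and only) detail worth spelling out.
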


In \cite{MEISTER2010741}, Meister et al. introduced a linear-time algorithm for recognizing digraphs $D$ with Kelly-width $\leq 2$. Applying their algorithm twice, on $D$ and on $\overleftarrow{D}$, gives a linear-time algorithm for recognizing digraphs $D$ with $\nu(D)\leq 1$.

\bibliographystyle{plain}
\bibliography{./biblio}

\end{document}